\def\b{\mathbb }
\def\phi{\varphi }
\def\epsilon{\varepsilon}
\theoremstyle{plain}
\newtheorem{theorem}{Theorem}[section]
\newtheorem{corollary}[theorem]{Corollary}
\newtheorem{lemma}[theorem]{Lemma}
\newtheorem{proposition}[theorem]{Proposition}
\theoremstyle{definition}
\newtheorem{definition}[theorem]{Definition}
\newtheorem{remark}[theorem]{Remark}
\numberwithin{equation}{section}
\numberwithin{equation}{section}
\begin{document}

\title{Central Limit Theorems for Radial Random Walks on $p\times q$ Matrices
  for $p\to\infty$}
%\headlinetitle{Radial Random Walks on $p\times q$ Matrices}
\author{Michael Voit\\
Fachbereich Mathematik, Universit\"at Dortmund\\
          Vogelpothsweg 87\\
          D-44221 Dortmund, Germany\\
e-mail: michael.voit@math.uni-dortmund.de}

\maketitle
%\begin{document}

\abstract
{Let $\nu\in M^1([0,\infty[)$ be a fixed  probability measure.
 For each dimension $p\in\b N$, let $(X_n^p)_{n\ge1}$ be 
i.i.d.~$\b R^p$-valued radial random variables with
 radial distribution $\nu$. We derive two central limit theorems for
$ \|X_1^p+\ldots+X_n^p\|_2$ for $n,p\to\infty$ with normal limits. The first CLT for $n>>p$ follows  from  
known
estimates of convergence in the CLT on $\b R^p$,
 while the second CLT for $n<<p$ will be a consequence of asymptotic properties of Bessel convolutions.

Both limit theorems are considered also for $U(p)$-invariant random walks on the space of $p\times q$
 matrices instead of   $\b R^p$ for $p\to\infty$ and fixed dimension $q$.}

Keywords: Radial random walks,
central limit theorems, random matrices, large dimensions,  matrix cones, Bessel convolution, 
Bessel functions of matrix argument.

Classification: 60F05, 60B10, 60B12, 33C70, 43A62

\section{Two central limit theorems}

This paper has its origin in the  following problem:
Let $\nu\in M^1([0,\infty[)$ be a fixed  probability measure. Then for each 
dimension $p\in\b N$ there is a unique radial probability measure $\nu_p\in
M^1(\b R^p)$ with $\nu$ as its radial part, i.e., $\nu$ is the image of $\nu_p$
under the norm mapping
$\phi_p(x):=\|x\|_2$. For each $p\in\b N$ consider i.i.d.~$\b R^p$-valued random
variables $X_k^p$, $k\in\b N$, with law $\nu_p$ as well as the associated radial
random walks 
$$\bigl(S_n^p:= \sum_{k=1}^n X_k^p)_{n\ge0}$$
on $\b R^p$. The  aim is to find limit
theorems for the $ [0,\infty[$-valued random variables $ \|S_n^{p}\|_2$ for
$n,p\to\infty$.
In \cite{V1} and \cite{RV} we proved that for all sequences $p_n\to\infty$,
$$ \| S_n^{p_n}\|_2^2/n\to  \sigma^2:=\sigma^2(\nu) :=\int_0^\infty x^2 \> d\nu(x)$$
under the condition $\sigma^2<\infty$. Moreover, in \cite{RV} an associated
strong law and a large deviation principle were derived under
the condition that $p_n$ grows fast enough. 
In this paper we present two associated central limit theorems (CLTs) under 
disjoint growth conditions for $p_n$.

The first CLT holds for $p_n<<n$ and is an obvious consequence of Berry-Esseen
estimates on $\b R^p$ with explicit constants depending on the dimensions $p$,
 which are due to Bentkus and G\"otze \cite{B}, \cite{BG2} (for a survey about
this topic we also recommend \cite{BGPR}):

\begin{theorem}\label{CLT1}
Assume that $\nu\in M^1([0,\infty[)$ with $\nu\ne\delta_0$ admits a finite third moment $m_3(\nu):=\int_{0}^\infty x^3 \>
d\nu(x)<\infty$, and that $\lim_{n\to\infty} n/p_n^3 =\infty $. Then
$$\frac{\sqrt p_n}{n\sigma^2\sqrt 2}(\|S_n^{p_n}\|_2^2 -n\sigma^2)$$
tends in distribution for $n\to\infty$ to the standard normal distribution 
$N(0,1)$. 
\end{theorem}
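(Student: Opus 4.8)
The plan is to reduce the assertion to the elementary central limit theorem for the $\chi^2$-distribution, and to control the deviation of the relevant random vector from a Gaussian one by a Berry--Esseen estimate for Euclidean balls in $\b R^{p_n}$.

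First I would renormalize. As $\nu\ne\delta_0$ and $m_3(\nu)<\infty$ we have $0<\sigma^2<\infty$ (by $x^2\le1+x^3$), so $Y_k^p:=(\sqrt p/\sigma)X_k^p$ for $k\ge1$ is well defined; these are i.i.d.\ $\b R^p$-valued, centred by radial symmetry, with covariance matrix $I_p$ (the law $\nu_p$ is isotropic with second moment $\sigma^2$), and $E\|Y_k^p\|_2^3=(p/\sigma^2)^{3/2}m_3(\nu)<\infty$ since $\|X_k^p\|_2$ has law $\nu$. Putting $Z_n:=n^{-1/2}\sum_{k=1}^nY_k^{p_n}$, so that $Z_n$ has covariance matrix $I_{p_n}$, a short computation gives the identity
$$\frac{\sqrt{p_n}}{n\sigma^2\sqrt2}\bigl(\|S_n^{p_n}\|_2^2-n\sigma^2\bigr)=\frac{\|Z_n\|_2^2-p_n}{\sqrt{2p_n}},$$
so it suffices to show that the right-hand side tends in distribution to $N(0,1)$.

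Next I would fix $t\in\b R$ and observe that, whenever $p_n+t\sqrt{2p_n}\ge0$, the event $\{(\|Z_n\|_2^2-p_n)/\sqrt{2p_n}\le t\}$ equals $\{\|Z_n\|_2\le(p_n+t\sqrt{2p_n})^{1/2}\}$, i.e.\ the event that $Z_n$ lies in a centred Euclidean ball. This is where the Berry--Esseen estimates of Bentkus and G\"otze enter: I would invoke their bound for balls (\cite{B}, \cite{BG2}) in the form
$$\sup_{r\ge0}\bigl|\b P(\|Z_n\|_2\le r)-\b P(\|G_{p_n}\|_2\le r)\bigr|\le C\,\frac{E\|Y_1^{p_n}\|_2^3}{\sqrt n}=\frac{C\,m_3(\nu)}{\sigma^3}\cdot\frac{p_n^{3/2}}{\sqrt n},$$
where $G_p\sim N(0,I_p)$ and $C$ is, for $p_n$ large, independent of the dimension. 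The hard part is precisely to have this dimension behaviour available: for \emph{balls} the dimension enters only through the factor $E\|Y_1^{p_n}\|_2^3\asymp p_n^{3/2}$, not through the additional $p_n^{1/4}$ that is unavoidable for arbitrary convex sets, and it is this refinement in \cite{B}, \cite{BG2} that makes the hypothesis $n/p_n^3\to\infty$ suffice; granting it, the right-hand side tends to $0$.

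Finally I would identify the Gaussian term: since $\|G_{p_n}\|_2^2$ is a sum of $p_n$ i.i.d.\ squares of $N(0,1)$-variables, each of mean $1$ and variance $2$, the ordinary CLT together with $p_n\to\infty$ gives
$$\b P\bigl(\|G_{p_n}\|_2\le(p_n+t\sqrt{2p_n})^{1/2}\bigr)=\b P\Bigl(\tfrac{\|G_{p_n}\|_2^2-p_n}{\sqrt{2p_n}}\le t\Bigr)\longrightarrow\Phi(t)\qquad(n\to\infty),$$
with $\Phi$ the distribution function of $N(0,1)$; for all large $n$ this also disposes of the degenerate case $p_n+t\sqrt{2p_n}<0$, which then cannot occur. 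Combining the three displays yields $\b P\bigl((\|Z_n\|_2^2-p_n)/\sqrt{2p_n}\le t\bigr)\to\Phi(t)$ for every $t\in\b R$, which is the claim.
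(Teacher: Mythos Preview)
Your proof is correct and follows essentially the same route as the paper: both arguments normalize so that the summands have covariance $I_{p_n}$, invoke Bentkus' Berry--Esseen estimate for Euclidean balls (Theorem~2 of \cite{B}) to compare the law of $\|Z_n\|_2^2$ with $\chi^2_{p_n}$ at cost $O(p_n^{3/2}/\sqrt n)$, and then apply the classical CLT to $(\chi^2_{p_n}-p_n)/\sqrt{2p_n}$. Your write-up is simply more explicit about the algebraic identity linking the two normalizations and about the translation from distribution functions of $\|Z_n\|_2^2$ to probabilities of centred balls.
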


\begin{proof}
The radial measure $\nu_p$ on $\b R^p$ has a covariance matrix $\Sigma^2$ which
is invariant under all conjugations w.r.t~orthogonal transformations. Therefore,
 $\Sigma^2=c_pI_p$ with the identity $I_p$ and some constant $c_p$. As
$\sigma^2=E(\|X_1^p\|_2^2)=pc_p$, we actually have $\Sigma^2= (\sigma^2/p)I_p$. 
Theorem 2 of Bentkus \cite{B}  implies after normalization that the
distribution function $F_{n,p}$ of $\frac{p}{n\sigma^2}\|S_n^p\|_2^2$ and the
distribution function $F_p$ of the $\chi_p^2$-distribution with $p$ degrees of
freedom satisfy
\begin{equation}\label{distributionfunction}
\|F_{n,p} -F_p\|_\infty\le C\cdot\frac{p^{3/2}}{\sqrt n}
\end{equation}
for $n,p\in\b N$ with a universal $C=C(\nu)$.
Therefore, for $p=p_n$ as in the theorem, we have uniform convergence of 
 distribution functions.
Moreover, the classical CLT shows that for $\chi^2_p$-distributed random
variables $X_p$ (with $E(X_p)=p$ and $Var(X_p)=2p$), the random variables $\frac{X_p-p}{\sqrt{2p}}$ tend to the
standard normal distribution $N(0,1)$ for $p\to\infty$. A combination of both
results readily implies the theorem. 
\end{proof}

\begin{remark}\label{BG-equation-remark}
The main result of \cite{BG2} suggests that for sufficiently large dimensions $p$
and $\nu\in M^1([0,\infty[)$ with finite fourth moment $m_4(\nu):=\int_{0}^\infty x^4 \>
d\nu(x)<\infty$, 
\begin{equation}\label{BG-equation}
\|F_{n,p} -F_p\|_\infty\le C\cdot\frac{p^{2}}{ n} \quad\quad (n,p\in\b N)
\end{equation}
holds (the dependence of the constants  is not clearly noted in  \cite{BG2}
and  difficult to verify). If (\ref{BG-equation}) is true, then Theorem \ref{CLT1} holds under the weaker condition
$\lim_{n\to\infty} n/p_n^2 =\infty $.
We also remark that the results of \cite{BG1} indicate that the method of the proof
of Theorem \ref{CLT1} above cannot go much beyond  this condition.
\end{remark}

In this paper, we  derive  the
following complementary CLT for $p_n>>n$:

\begin{theorem}\label{CLT2}
Assume that $\nu\in M^1([0,\infty[)$ admits a finite fourth moment $m_4(\nu):=\int_{0}^\infty x^4 \>
d\nu(x)<\infty$, and that $\lim_{n\to\infty} n^2/p_n\to 0$. Then
$$\frac{\|S_n^{p_n}\|_2^2 -n\sigma^2}{\sqrt n}$$
tends in distribution for $n\to\infty$ to the normal distribution 
$N(0,m_4(\nu)-\sigma^4)$ on $\b R$.
\end{theorem}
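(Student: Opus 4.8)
The plan is to trade the high-dimensional geometry for a classical one-dimensional CLT by means of the polar decomposition of radial vectors. Since $\nu_p=\int_0^\infty\omega_r\,d\nu(r)$, where $\omega_r$ is the uniform distribution on the sphere of radius $r$ in $\b R^p$, one may realize $X_k^{p_n}=R_kU_k$, where $R_1,R_2,\dots$ are i.i.d.\ with law $\nu$, the $U_1,U_2,\dots$ are i.i.d.\ uniform on the unit sphere $S^{p_n-1}$, and the whole family $R_1,U_1,R_2,U_2,\dots$ is independent. Expanding the square then gives
\[
  \|S_n^{p_n}\|_2^2=\sum_{k=1}^n R_k^2+2\sum_{1\le j<k\le n} R_jR_k\langle U_j,U_k\rangle=:A_n+C_n ,
\]
and the point is that $A_n$ does not involve the dimension at all, whereas $C_n$ will turn out to be negligible on the scale $\sqrt n$.

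First I would treat $A_n$. The summands $R_k^2$ are i.i.d.\ with mean $E(R_1^2)=\sigma^2$ and, since $m_4(\nu)<\infty$, with finite variance $\mathrm{Var}(R_1^2)=m_4(\nu)-\sigma^4$. Hence the ordinary Lindeberg--L\'evy central limit theorem yields $(A_n-n\sigma^2)/\sqrt n\to N(0,m_4(\nu)-\sigma^4)$ in distribution; note that this sequence does not depend on $p_n$. This already delivers the Gaussian limit claimed in the theorem and accounts for the fourth-moment hypothesis.

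Next I would show $C_n/\sqrt n\to 0$ in probability. Since the $U_k$ are independent of the $R_k$ and $E(\langle U_j,U_k\rangle)=0$ for $j\ne k$, we get $E(C_n)=0$, and in fact $C_n$ is centred conditionally on $(R_k)_k$, so that $\mathrm{Cov}(A_n,C_n)=0$. Using $E(U^aU^b)=\delta_{ab}/p$ for $U$ uniform on $S^{p-1}$, a short second-moment computation gives, for $j\ne k$ and $j'\ne k'$, that $E(\langle U_j,U_k\rangle\langle U_{j'},U_{k'}\rangle)$ equals $1/p_n$ when $\{j,k\}=\{j',k'\}$ and $0$ otherwise; hence
\[
  \mathrm{Var}(C_n)=4\binom{n}{2}\sigma^4\cdot\frac{1}{p_n}=\frac{2n(n-1)\sigma^4}{p_n},\qquad\text{so}\qquad \mathrm{Var}(C_n/\sqrt n)=\frac{2(n-1)\sigma^4}{p_n}\to 0
\]
under the hypothesis $n^2/p_n\to 0$ (in fact already under $n/p_n\to 0$). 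Therefore $C_n/\sqrt n\to 0$ in $L^2$, hence in probability, and Slutsky's lemma applied to
\[
  \frac{\|S_n^{p_n}\|_2^2-n\sigma^2}{\sqrt n}=\frac{A_n-n\sigma^2}{\sqrt n}+\frac{C_n}{\sqrt n}
\]
finishes the proof.

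The argument is largely routine once the split $A_n+C_n$ is in place; the only delicate point is the bookkeeping of the covariances $E(\langle U_j,U_k\rangle\langle U_{j'},U_{k'}\rangle)$, together with the elementary but crucial observation that $C_n$ is conditionally centred, which decouples $A_n$ from the cross terms. An alternative --- presumably the route behind the phrase ``asymptotic properties of Bessel convolutions'' --- is to work with the Bessel--Kingman convolution of index $p/2-1$ on $[0,\infty[$ carried by the radial walk: the law of $\|S_n^{p_n}\|_2$ is the $n$-th convolution power of $\nu$ in this hypergroup, so its Hankel transform equals $\bigl(\int_0^\infty j_{p_n/2-1}(\lambda r)\,d\nu(r)\bigr)^n$, and substituting the power-series expansion $j_{p_n/2-1}(z)=1-z^2/(2p_n)+\dots$ together with the high-dimensional Gaussian asymptotics of this convolution produces the same limit. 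This is also the formulation that carries over, via Bessel functions of matrix argument, to $U(p)$-invariant random walks on the $p\times q$ matrices, with $\mathrm{tr}\bigl((S_n^{p_n})^{*}S_n^{p_n}\bigr)$ in the role of $\|S_n^{p_n}\|_2^2$.
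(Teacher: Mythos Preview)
Your proof is correct, and it is genuinely different from the route the paper takes. The paper does \emph{not} use the polar decomposition $X_k=R_kU_k$ at all; instead it views the law of $\|S_n^{p_n}\|_2$ as the $n$-fold Bessel--Kingman convolution power $\nu^{(n,*_{\mu_n})}$ on $[0,\infty[$ with $\mu_n=p_n/2$, and proves a quantitative comparison (Proposition~\ref{proposition1}) between $*_{\mu}$ and the deterministic semigroup convolution $\delta_r\bullet\delta_s=\delta_{\sqrt{r^2+s^2}}$: for root-Lipschitz test functions one has $\bigl|\nu^{(n,*_\mu)}(f)-\nu^{(n,\bullet)}(f)\bigr|\le CLn^{3/2}/\sqrt{\mu}$. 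Since the $\bullet$-walk is exactly $\bigl(\sum_k R_k^2\bigr)^{1/2}$, the classical CLT for $\sum_k R_k^2$ takes over once $n^{3/2}/\sqrt{\mu_n}\cdot n^{-1/2}=n/\sqrt{\mu_n}\to0$, which is precisely the hypothesis $n^2/p_n\to0$. Your argument is more elementary, bypasses the hypergroup machinery entirely, and --- as you observe --- actually works under the weaker condition $n/p_n\to0$, which is the sharp threshold conjectured in Remark~(1) after the theorem. What the paper's approach buys in return is generality: the Bessel-convolution estimate is proved uniformly for all real indices $\mu\ge2\rho$ and on the matrix cones $\Pi_q$, so Theorem~\ref{CLT2} drops out as the special case $q=1$ of Theorem~\ref{theorem-mu-ge-n}, which covers Bessel random walks with parameters that have no group realization. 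Your alternative sketch in the last paragraph is in fact close in spirit to what the paper does, except that the paper works directly with the convolution formula~(\ref{def-convo}) rather than with Hankel transforms.
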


\begin{remark}
\begin{enumerate}\itemsep=-1pt
\item[\rm{(1)}] Assume  that $\nu\in M^1([0,\infty[)$ admits a finite fourth moment. A simple calculation then 
yields the moments up to order 4 where in particular
\begin{equation}\label{kappa1}
E\bigl(\bigl(\|S_n^{p_n}\|_2^2 -n\sigma^2\bigr)^2\bigr) = n(m_4(\nu)-\sigma^4) +2\frac{n(n-1)}{p_n}\sigma^4.
\end{equation}
Theses moments up to order 4  lead to the {\bf conjecture} that  the assertion of Theorem
  \ref{CLT1} holds precisely for  $n/p_n\to \infty$, and the assertion of Theorem  \ref{CLT2}
precisely for $n/p_n\to 0$. 
In fact, this was recently proved for measures having all moments by the moment convergence method
by Grundmann \cite{G}. He also obtains results for the case $p_n=nc$.
\item[\rm{(2)}] 
A comparison of  Theorems \ref{CLT1} and \ref{CLT2} has the following possible implication
to statistics: Assume that $\nu\in M^1([0,\infty[)$ is known and that the random variable
$\|S_n^{p_n}\|_2$
can be observed with a known time parameter $n$, but an unknown dimension $p$
which has to be estimated. 
Then $p$ can be recovered in a reasonable way for $n>>p^3$ while this is
not the case for $n<<\sqrt p$.
\end{enumerate}\end{remark}

In this paper we shall also derive  two  generalizations of the preceding CLTs:

\bigskip 
The first extension concerns a matrix-valued version:
For fixed dimensions  $p,q\in \b N$ let 
$M_{p,q} = M_{p,q}(\b F)$  be the space of $p\times q$-matrices over 
 $\b F = \b R, \b C$ or the quaternions $\b H$ with real dimension $d=1,2$ or
 $4$ respectively.
 This is a Euclidean vector space of real dimension $dpq$ with scalar product
$\langle x,y\rangle = \mathfrak R {tr}(x^*y)$ where $x^* := \overline x^t$, 
$\mathfrak R t := \frac{1}{2}(t+ \overline t)$ is the real part of $t\in \b F$,  and
${tr}$  is the trace in $M_{q}:= M_{q,q}.$
A  measure  on $M_{p,q}$ is called radial if it is 
invariant under the action of the unitary group $U_p= U_p(\b F)$ 
by left multiplication,
$U_p\times M_{p,q} \to M_{p,q}$,  $(u,x) \mapsto ux$.
This action is  orthogonal w.r.t.~the scalar product above, and, by uniqueness
of the polar decomposition, two matrices $x,y\in M_{p,q}$
belong to  the same $U_p$-orbit if and only if $x^*x = y^*y$. 
Thus the  space $M_{p,q}^{U_p}$ of $U_p$-orbits in $M_{p,q}$ is naturally parameterized by the 
cone $\Pi_q = \Pi_{q}(\b F)$ of positive semidefinite $q\times q$-matrices over
$\b F$. We
 identify $M_{p,q}^{U_p}$ with  $\Pi_q$ via $U_qx\simeq (x^*x)^{1/2}$, i.e., the
 canonical projection $M_{p,q}\to M_{p,q}^{U_p}$ will be realized as the mapping
$$ \phi_p: M_{p,q} \to \Pi_{q},\quad x\mapsto (x^*x)^{1/2}.$$ 
The square root is used here in order to ensure for  $q=1$ and  $\b F= \b R$
that the
 setting above with
 $\Pi_1=[0,\infty[$ and $\phi_p(x)=\|x\|$ appears.
By  taking images of measures,   $\phi_p$ induces a Banach space
isomorphism between the space $M_b^{U_q}(M_{p,q})$  of all bounded radial 
Borel measures on $M_{p,q}$ and  the space $M_b(\Pi_q)$ of bounded Borel measures on the cone $\Pi_q$.  In particular, for
each  $\nu\in M^1(\Pi_q)$ there is a unique radial probability
measure $\nu_p\in M^1(M_{p,q})$ with $\phi_p(\nu_p)=\nu$. 

As in the case $q=1$, we now consider for each $p\in\b N$  i.i.d.~$M_{p,q}$-valued random
variables $X_k^p$, $k\in\b N$, with law $\nu_p$ and the associated radial
random walks 
$\bigl(S_n^p:= \sum_{k=1}^n X_k^p)_{n\ge0}$.

\begin{definition}
We  say that $\nu\in M^1(\Pi_q)$ admits a $k$-th 
moment ($k\in\b N$) if 
$$m_k(\nu):=\int_{\Pi_q}\|s\|^k \> d\nu(s)<\infty$$ where  $\|s\|=
(tr{s^2})^{1/2}$ is the Hilbert-Schmidt norm. 
 If the second moment exists, the second moment of $\nu$ is defined as the matrix-valued integral
$$\sigma^2:=\sigma^2(\nu):=\int_{\Pi_q} s^2 \> d\nu(s)\in \Pi_q.$$
\end{definition}

With these notions,  the following generalizations of 
 Theorems \ref{CLT1} and \ref{CLT2} hold:

\begin{theorem}\label{CLT3}
Assume that  $m_4(\nu)<\infty$. Moreover, let $\lim_{n\to\infty} n/p_n^4 =\infty $. Then
$$\frac{\sqrt{p_n}}{n}(\phi_{p_n}(S_n^{p_n})^2-n\sigma^2)$$
tends in distribution  to some normal distribution $N(0,T^2)$ on the
vector space $H_q$ of hermitian $q\times q$-matrices over $\b F$ (with a
 covariance matrix $T^2=T^2(\sigma^2)$ described in the proof below for $\b F=\b R$). 
\end{theorem}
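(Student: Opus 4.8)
The plan is to reduce the matrix-valued statement to the scalar case (Theorem \ref{CLT2}) applied to suitable linear functionals, together with a multivariate CLT obtained via the Cramér–Wold device. First I would fix $\b F=\b R$ for concreteness (the cases $\b F=\b C,\b H$ are analogous with $d$ replaced appropriately) and write $S_n^p=\sum_{k=1}^n X_k^p$ with $X_k^p\in M_{p,q}$. The key object is the $H_q$-valued random matrix $Y_n^p:=(S_n^p)^*S_n^p=\phi_p(S_n^p)^2$. Expanding, $Y_n^p=\sum_{k=1}^n (X_k^p)^*X_k^p+\sum_{k\ne l}(X_k^p)^*X_l^p$. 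The diagonal part $\sum_k (X_k^p)^*X_k^p$ is a sum of $n$ i.i.d.\ $H_q$-valued terms with mean $\sigma^2$ (by $U_p$-invariance the mean of $(X_1^p)^*X_1^p$ is exactly $\sigma^2$, independent of $p$), so by the ordinary multivariate CLT on $H_q$, $\frac{1}{\sqrt n}\bigl(\sum_k (X_k^p)^*X_k^p-n\sigma^2\bigr)$ converges to a Gaussian on $H_q$; but note this term alone carries the $\sqrt n$-scaling, whereas here we divide by $n/\sqrt{p_n}$, which is much smaller. So the diagonal part, after multiplication by $\sqrt{p_n}/n$, tends to $0$ in probability under $n/p_n^4\to\infty$ (indeed already under $n/p_n\to\infty$), provided $m_4<\infty$ controls the second moments of $(X_1^p)^*X_1^p$ uniformly in $p$ — this uniform bound needs the explicit form of the fourth mixed moments of a radial measure on $M_{p,q}$, which I would record as a lemma.

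The main term is therefore the off-diagonal part $Z_n^p:=\sum_{k\ne l}(X_k^p)^*X_l^p$, and the heart of the proof is to show that $\frac{\sqrt{p_n}}{n}Z_n^p$ converges in distribution to $N(0,T^2)$ on $H_q$. By the Cramér–Wold device it suffices to prove that for every fixed $h\in H_q$ the scalar random variable $\frac{\sqrt{p_n}}{n}\langle Z_n^p,h\rangle=\frac{\sqrt{p_n}}{n}\,\mathfrak R\,\mathrm{tr}(h\,Z_n^p)$ converges to a centered normal with the appropriate variance. Writing $\langle Z_n^p,h\rangle=\sum_{k\ne l}\mathfrak R\,\mathrm{tr}(h(X_k^p)^*X_l^p)$, this is a degenerate (mean-zero) $U$-statistic of order $2$ with kernel depending on $p$. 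The standard route is a martingale CLT: order the pairs and write $\langle Z_n^p,h\rangle=\sum_{l=2}^n W_l^p$ where $W_l^p:=\sum_{k<l}\mathfrak R\,\mathrm{tr}\!\bigl(h((X_k^p)^*X_l^p+(X_l^p)^*X_k^p)\bigr)$ is a martingale difference w.r.t.\ the filtration generated by $X_1^p,\dots,X_l^p$. One then checks the two conditions of the martingale CLT: (i) the conditional variances $\sum_l E\bigl((W_l^p)^2\mid \mathcal F_{l-1}\bigr)$, after rescaling by $p_n/n^2$, converge in probability to the limiting variance $\langle T^2 h,h\rangle$; (ii) a conditional Lindeberg condition, which I would verify via a fourth-moment (Lyapunov) bound using $m_4(\nu)<\infty$. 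The computation in (i) is where the dimension asymptotics enter: one finds $E\bigl((W_l^p)^2\mid\mathcal F_{l-1}\bigr)=\frac{2}{p}\sum_{k<l}\langle h, A(X_k^p)h\rangle+\cdots$ type expressions, whose leading behaviour after summing over $l$ and dividing by $n^2/p_n$ is a constant bilinear form in $h$ determined by $\sigma^2$; this is exactly the source of the factor $\sqrt{p_n}/n$ and of the formula for $T^2=T^2(\sigma^2)$, which I would write out explicitly by computing $E\bigl(\langle (X_1^p)^*(\cdot)\rangle\bigr)$ against $\sigma^2$ using the $U_p$-invariance and the representation of a radial measure as the image of $\nu$ under $s\mapsto u\binom{s}{0}$, $u\in U_p$ Haar-distributed.

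The main obstacle I expect is step (i): controlling the conditional variance requires precise second- and fourth-order moment identities for radial measures $\nu_p$ on $M_{p,q}$ as functions of $p$, and showing that all the terms other than the one producing $T^2$ are lower order under the hypothesis $n/p_n^4\to\infty$. (The appearance of $p_n^4$ rather than $p_n$ here, in contrast to Theorem \ref{CLT2}, should come from the cross terms in the variance of a quadratic form in the $q\times q$ matrix entries, i.e.\ from $q>1$ — one must bound $O(n^2/p^2)$-type error terms against the main $O(n^2/p)$ term, and iterate through the moment hierarchy, which costs additional powers of $p$.) A cleaner alternative I would also consider is to identify $T^2$ and establish tightness first, then pin down the limit by showing convergence of all mixed moments of $\frac{\sqrt{p_n}}{n}\langle Z_n^p,h\rangle$ to those of a Gaussian — this is essentially the moment method of Grundmann \cite{G} referred to above, and under $m_4<\infty$ one only needs moments up to order a fixed finite number if combined with a truncation argument. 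Either way, once the scalar limit $\langle T^2h,h\rangle$ is identified for all $h\in H_q$, polarization gives the full covariance form $T^2$ on $H_q$, and Slutsky's lemma combines the vanishing diagonal part with the Gaussian off-diagonal part to finish the proof.
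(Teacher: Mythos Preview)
Your approach is genuinely different from the paper's. The paper does \emph{not} expand $(S_n^p)^*S_n^p$ into diagonal and off-diagonal parts, and it does not use a martingale CLT for a degenerate $U$-statistic. Instead, it argues exactly as in the proof of Theorem~\ref{CLT1}: the covariance of $\nu_p$ on $M_{p,q}$ is $\Sigma_p^2=\frac{1}{p}I_p\otimes\sigma^2$; Bentkus' Berry--Esseen bound for \emph{convex sets} in $M_{p,q}$ gives
\[
\sup_{K\ \text{convex}}\Bigl|P\Bigl(\tfrac{\sqrt{p}}{\sqrt n}S_n^p\in K\Bigr)-N(0,\Sigma_1^2)(K)\Bigr|\le C\,\frac{p^2}{\sqrt n},
\]
which tends to $0$ precisely when $n/p_n^4\to\infty$; pushing forward by $\phi_p^2$ replaces $N(0,\Sigma_1^2)$ by a Wishart law $W_p$ on $\Pi_q$, and the classical CLT on $H_q$ applied to the Wishart family yields the Gaussian limit with covariance $T^2$. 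So in the paper the exponent $4$ has a completely transparent origin: it is the Bentkus rate $p^2/\sqrt n$ for convex (rather than spherical) sets, not any ``iteration through the moment hierarchy''.

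Your route is nevertheless a reasonable alternative, and in fact potentially sharper. The diagonal/off-diagonal splitting is correct, the diagonal piece does vanish already under $p_n/n\to0$, and the off-diagonal piece is a genuine degenerate triangular $U$-statistic whose martingale increments $W_l^p$ satisfy $E[(W_l^p)^2\mid\mathcal F_{l-1}]=\frac{c}{p}\,\mathrm{tr}\bigl(hT_{l-1}^*T_{l-1}h\sigma^2\bigr)$; a short computation shows $\frac{1}{n^2}\sum_l T_{l-1}^*T_{l-1}\to\frac12\sigma^2$ in probability as soon as $n,p_n\to\infty$, and the Lyapunov condition (using the $O(1/p^2)$ decay of fourth moments of $\langle A,X_l^p\rangle$ coming from $U_p$-invariance) goes through under $n\to\infty$ alone. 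Thus your argument, carried out carefully, would not need $n/p_n^4\to\infty$ at all---only $n/p_n\to\infty$, in line with Remark~1.4(1) and \cite{G}. Two points to correct in your write-up: the opening reference to Theorem~\ref{CLT2} is wrong (you are in the $n\gg p$ regime of Theorem~\ref{CLT1}, not the $p\gg n$ regime), and your speculative explanation for the exponent $4$ should be dropped---it is an artefact of the paper's Berry--Esseen method, not of the problem itself.
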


\begin{proof}
We regard $M_{p,q}= M_{p,q}(\b F)$ as $\b F^p\otimes \b F^q$.  
The radial measure $\nu_p$ on $M_{p,q} $ has a covariance matrix $\Sigma^2_p$ which
is invariant under all conjugations w.r.t.~$U_p$, i.e., we have
 $\Sigma^2_p=I_p\otimes T_p$ for some $T_p\in \Pi_q$. As
$\sigma^2=E((X_k^p)^*X_k^p)=pT_p$, we have $\Sigma^2_p= \frac{1}{p}\cdot I_p\otimes\sigma^2 $. 

Moreover, Theorem 1 of Bentkus \cite{B}  implies after normalization that there is an universal constant $C>0$ with
$$\bigl| P(\frac{\sqrt p}{\sqrt n}S_n^p\in K)- N(0,\Sigma^2_1)(K)|\le C \cdot\frac{p^{2}}{\sqrt n}$$
for all convex sets $K\subset M_{p,q}$ and all $n,p$.
Therefore,  for $p=p_n$ as in the theorem, 
 $$\bigl|P(\frac{\sqrt{p_n}}{\sqrt n}S_n^{p_n}\in K)- N(0,\Sigma^2_1)(K)\bigr|\to0  \quad\quad{\rm for}\quad
n\to\infty$$
 uniformly in all convex sets $K\subset M_{p,q}$. 
Using the projections $ \phi_p: M_{p,q} \to \Pi_{q}$, we obtain 
\begin{equation}\label{limit-esti} |P(\frac{p_n}{n}\cdot\phi_{p_n}(S_n^{p_n})^2\in L)- W_{p_n}(L)|\to0
 \quad\quad{\rm for}\quad n\to\infty \end{equation}
 uniformly in all convex sets $L\subset \Pi_q$
where the measures $W_{p_n}:=\phi_{p_n}^2(N(0,\Sigma^2_1))$ are certain Wishart distributions on $\Pi_q$ with $p_n$ degrees of freedom.
By definition, the  $W_{p_n}$ appear as the distribution of a $p_n$-fold sum of  iid $ W_{1}$-distributed  random variables on 
the vector space $H_q$ with expectation $\sigma^2$ and some covariance matrix $T^2=T^2(\sigma^2)$ described below.
A combination of Eq.~(\ref{limit-esti}) and the classical CLT on $H_q$ then readily implies the theorem. 

We finally compute $T^2$  for  $\b F=\b R$.
Let $X=(X_1,\ldots,X_q)$ be a $\b R^q$-valued, standard normal distributed random variable,
 and $\sigma\in\Pi_q$ the positive semidefinite root of $\sigma^2$. Then $Y:=X\sigma$ is $\b R^q$-valued with distribution $N(0,\sigma^2)$, and
the $H_q$-valued random variable $Y^*Y=X^*\sigma^2X$ has the distribution $ W_{1}$. 
We notice that $E(X_i^4)=3$, $E(X_i^2X_j^2)=1$ for $i\ne j$, and that $E(X_iX_jX_kX_l)=0$ whenever 
at least one index appears only once in $\{i,j,k,l\}$. This implies 
\begin{align}
&(T^2)_{(i,j),(k,l)} = Kov(Y_iY_j, Y_kY_l)= E(Y_iY_jY_kY_l)-E(Y_iY_j)E(Y_kY_l)
\notag\\ 
&= \sum_{a,b,c,d=1}^q \sigma_{a,i} \sigma_{b,j} \sigma_{c,k} \sigma_{d,l}E(X_aX_bX_cX_d) 
 -\Bigl(\sum_{a=1}^q\sigma_{a,i} \sigma_{a,j}\Bigr)\Bigl(\sum_{a=1}^q\sigma_{a,k} \sigma_{a,l}\Bigr)
\notag\\ 
&= \sum_{a,b=1}^q \bigl(\sigma_{a,i}\sigma_{a,k}\sigma_{b,j}\sigma_{b,l} + \sigma_{a,i}\sigma_{a,l}\sigma_{b,j}\sigma_{b,k}\bigr)
\notag\\ 
&= (\sigma^2)_{i,k}(\sigma^2)_{j,l} +(\sigma^2)_{i,l}(\sigma^2)_{j,k}.
\end{align}
The computation for $\b F=\b C,\b H$ is similar.
\end{proof}

Notice that the proof  is analog to that of  Theorem \ref{CLT1}. The 
 slightly stronger
condition is required  as here certain convex sets in
$M_{p,q}$ instead of balls are used, where only weaker
convergence results  form \cite{B} are available.
As for $q=1$, we expect that  Theorem  \ref{CLT3} remains true under
 slightly weaker conditions than $ n/p_n^4 \to\infty $.

\begin{theorem}\label{CLT4}
Assume that $m_4(\nu)<\infty$ and 
$\lim_{n\to\infty} n^2/p_n =0 $. Then
$$\frac{1}{\sqrt n}(\phi_{p_n}(S_n^{p_n})^2 -n\sigma^2)$$
tends in distribution to the normal distribution $N(0,\Sigma^2)$ on $H_q$ where 
$\Sigma^2$ is the covariance matrix of $\phi_{p_n}(X_1^{p_n})$ (which is
independent of $p_n$).
\end{theorem}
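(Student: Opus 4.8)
The plan is to expand $\phi_{p_n}(S_n^{p_n})^2 = (S_n^{p_n})^*S_n^{p_n}$ bilinearly and separate the diagonal ``sum of i.i.d.'' term from the off-diagonal cross terms, exactly paralleling the scalar case $q=1$ in Theorem \ref{CLT2}. Writing $S_n^{p_n}=\sum_{k=1}^n X_k^{p_n}$, we get
\begin{equation}\label{expansion}
(S_n^{p_n})^*S_n^{p_n} = \sum_{k=1}^n (X_k^{p_n})^*X_k^{p_n} + \sum_{1\le k\ne l\le n} (X_k^{p_n})^*X_l^{p_n}.
\end{equation}
Note $(X_k^{p_n})^*X_k^{p_n} = \phi_{p_n}(X_k^{p_n})^2$, and these are i.i.d.\ $H_q$-valued random variables with law $\nu$ (pushed forward by $s\mapsto s^2$), expectation $\sigma^2$, and covariance matrix $\Sigma^2$ independent of $p_n$, finite because $m_4(\nu)<\infty$. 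So the first sum, after centering by $n\sigma^2$ and scaling by $n^{-1/2}$, converges by the classical multivariate CLT on $H_q$ to $N(0,\Sigma^2)$. The whole theorem therefore reduces to showing that the off-diagonal remainder $R_n := n^{-1/2}\sum_{k\ne l}(X_k^{p_n})^*X_l^{p_n}$ tends to $0$ in probability.

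First I would handle $R_n$ by a second-moment (variance) estimate. By $U_{p_n}$-invariance of the law of each $X_k^{p_n}$, one has $E(X_k^{p_n}) = 0$ in $M_{p_n,q}$ (averaging over $U_{p_n}$ acting by left multiplication kills the mean, since $-I\in U_p$ or more simply since the orbit average of $ux$ over $u\in U_p$ is $0$ for $q\le p$), hence each cross term $(X_k^{p_n})^*X_l^{p_n}$ with $k\ne l$ has mean $0$ by independence. For the variance I would compute $E(\|R_n\|^2)$ using independence across distinct pairs: the surviving contributions come from pairs $\{k,l\}$ matched with themselves, giving a bound of the form
$$E\bigl(\|R_n\|^2\bigr) \;\le\; \frac{C}{n}\cdot n(n-1)\cdot E\bigl(\|(X_1^{p_n})^*X_2^{p_n}\|^2\bigr) \;\le\; C\,n \cdot E\bigl(\|(X_1^{p_n})^*X_2^{p_n}\|^2\bigr).$$
The key point is that $E(\|(X_1^{p_n})^*X_2^{p_n}\|^2)$ is of order $1/p_n$: this is precisely the kind of estimate already visible in Eq.~(\ref{kappa1}) for $q=1$, where the off-diagonal contribution to the second moment is $2n(n-1)\sigma^4/p_n$. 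The cleanest derivation uses the explicit covariance structure $\Sigma^2_{p_n} = \frac1{p_n} I_{p_n}\otimes\sigma^2$ of $\nu_{p_n}$ established in the proof of Theorem \ref{CLT3}: conditioning on $\phi_{p_n}(X_1),\phi_{p_n}(X_2)$ and using the conditional covariance of the normalized matrices together with $m_4(\nu)<\infty$, one gets $E(\|(X_1^{p_n})^*X_2^{p_n}\|^2) = \frac{1}{p_n}\,\mathrm{tr}(\sigma^2)^2 \cdot(\text{const})$, hence $E(\|R_n\|^2)\le C\,n^2/p_n \to 0$ by hypothesis. Chebyshev then gives $R_n\to 0$ in probability.

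Combining: $n^{-1/2}(\phi_{p_n}(S_n^{p_n})^2 - n\sigma^2)$ is the sum of a term converging in distribution to $N(0,\Sigma^2)$ and a term $R_n\to 0$ in probability, so Slutsky's theorem finishes the proof. The main obstacle is the variance bound on the cross term: one must show $E(\|(X_1^{p_n})^*X_2^{p_n}\|^2)=O(1/p_n)$ uniformly, which requires carefully exploiting the $U_p$-invariance — the natural route is to write $X_k^{p_n} = U_k\,\phi_{p_n}(X_k^{p_n})$ with $U_k$ (a suitable partial isometry / the $p\times q$ part of) Haar-distributed on $U_{p_n}$, independent of the radial part, and then reduce to the Haar-averaging identity $\int_{U_p} (u_1^*u_2)\otimes\overline{(u_1^*u_2)}\,du_1\,du_2$ being $O(1/p)$ in norm; this last fact is the $q\times q$ analogue of the elementary computation $E|\langle e,e'\rangle|^2 = 1/p$ for independent uniform unit vectors in $\b F^p$. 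Once this lemma is in place, everything else is routine moment bookkeeping and an application of the classical CLT already used for Theorem \ref{CLT3}.
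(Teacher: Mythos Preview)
Your proposal is correct but follows a genuinely different route from the paper. The paper does \emph{not} expand $(S_n^{p_n})^*S_n^{p_n}$ in the ambient space $M_{p_n,q}$; instead it projects everything to $\Pi_q$ from the start and compares the Bessel hypergroup convolution $*_\mu$ (for $\mu=p_nd/2$) with the deterministic semigroup convolution $\bullet$ given by $r\bullet s=\sqrt{r^2+s^2}$. The core estimate (Proposition~\ref{proposition1}) is
\[
\bigl|\nu^{(n,*_\mu)}(f)-\nu^{(n,\bullet)}(f)\bigr|\le CL\,\frac{n^{3/2}}{\sqrt{\mu}}
\]
for root-Lipschitz $f$, obtained by replacing one $*_\mu$ by one $\bullet$ at a time and controlling each step via the explicit integral formula~(\ref{def-convo}). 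Since the $\bullet$-convolution corresponds exactly to classical addition after squaring, the classical CLT on $H_q$ handles $\nu^{(n,\bullet)}$, and the displayed error is $O(n/\sqrt{\mu_n})\to0$ under $n^2/p_n\to0$.

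What each approach buys: the paper's argument works verbatim for all real indices $\mu>\rho-1$, yielding Theorem~\ref{theorem-mu-ge-n} for general Bessel random walks with no group realisation; your bilinear expansion is tied to the group cases $\mu=pd/2$. Conversely, your route is more elementary (no hypergroup machinery) and in fact sharper: your own computation gives $E\|R_n\|^2\le C\,n\cdot E\|(X_1^{p_n})^*X_2^{p_n}\|^2= O(n/p_n)$, not $O(n^2/p_n)$ as you wrote in the last step. With that slip corrected, your argument actually proves the theorem under the weaker hypothesis $n/p_n\to0$, which the paper only states as the conjectured optimal range in Remark~1.4(1) (there attributed, under stronger moment assumptions, to Grundmann~\cite{G}).
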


Notice that for $q=1$, Theorem  \ref{CLT4} completely agrees with \ref{CLT2}.
Theorem  \ref{CLT4} will appear in Section 3 below as a special case of the even more general 
CLT \ref{theorem-mu-ge-n}.

\bigskip
We next turn to this generalization:
Consider again the Banach space isomorphism $\phi_p:M_b^{U_q}(M_{p,q})\to M_b(\Pi_q).$ The usual group convolution on $M_{p,q}$ induces 
a Banach-$*$-algebra-structure on $M_b(\Pi_q)$ such that this  becomes a
probability-preserving Banach-$*$-algebra isomorphism. 
 The space $\Pi_q$ together with this new convolution becomes a so-called commutative orbit hypergroup; see \cite{J},\cite{BH},
and \cite{R}. Moreover, for  $p\ge 2q$, Eq.~(3.5) and Corollary 3.2 of
\cite{R} show that  the convolution  of  point measures on
$\Pi_q$ induced from $M_{p,q}$ is given by
\begin{equation}\label{def-convo-groupcase}
(\delta_r *_\mu \delta_s)(f) := 
\frac{1}{\kappa_{\mu}}\int_{D_q} f\bigl(\sqrt{r^2 + s^2 + svr + rv^*\!s}\,\bigr)\,
\Delta(I-vv^*)^{\mu-\rho}\, dv
\end{equation}
with $\mu:=pd/2$,
$\rho:=d\bigl(q-\frac{1}{2}\bigr) +1$,
$$ D_{q} := \{ v\in M_{q}: v^*v < I\}$$
(where $ v^*v <I$ means that $I- v^*v$ is  positive definite),
and with the normalization constant
\begin{equation}\label{kappa}
 \kappa_\mu := \int_{D_q} \Delta(I-v^*v)^{\mu-\rho} dv. 
\end{equation}
The convolution on $M_b(\Pi_q)$ is just given by bilinear, weakly
continuous extension.

It was observed in  \cite{R} that Eq.~(\ref{def-convo-groupcase})  defines a
commutative hypergroup $(\Pi_q, *_\mu)$ for all indices $\mu\in\b R$ with
$\mu>\rho-1$, where $0\in \Pi_q$ is the  identity  and the involution is   the identity mapping. These hypergroups are closely related 
with a product formula for Bessel functions $J_\mu$  on  the cone $\Pi_q$ and 
are therefore called Bessel hypergroups. For details  we
refer to \cite{FK}, \cite{H}, and in particular \cite{R}.
For general indices $\mu$, these Bessel hypergroups  $(\Pi_q, *_\mu)$ have no
group interpretation as in the cases $\mu=pd/2$ with integral $p$, but nevertheless the notion of random walks on these
hypergroups is still meaningful. For $q=1$, such
structures and associated random walks were investigated by Kingman \cite{K}
and many others; see \cite{BH}.

\begin{definition}\label{random-walk}
 Fix $\mu>\rho-1$ and a probability measure $\nu\in M^1(\Pi_q)$.
A Bessel random walk $(S_n^\mu)_{n\ge0}$ on $\Pi_q$ of index $\mu$ and with law $\nu$
 is a time-homogeneous Markov chain 
on $\Pi_q$   with  $S_0^\mu=0$ and   transition probability
$$P(S_{n+1}^\mu\in A| S_n^\mu=x) \> =\> (\delta_x*_\mu \nu)(A)$$
for $x\in\Pi_q$ and Borel sets $A\subset \Pi_q$. 
\end{definition}

This notion has its origin in the following well-known fact
for  the orbit cases $\mu=pd/2$, $p\in\b N$: If for a given $\nu\in
M^1(\pi_q)$ we consider the associated radial random walk $(S_n^p)_{n\ge0}$ on
$M_{p,q}$ as  above, then $(\phi_p(S_n^p))_{n\ge0}$ is a random walk  on $\Pi_q$ of index $\mu=pd/2$
with law $\nu$.

We shall derive Theorem \ref{CLT4} in Section 3 in this more general
setting for $\mu\in\b R$, $\mu\ge 2q$, as the proof is  precisely  the same as in the
group case. The proof will rely on facts on these Bessel convolutions which
we recapitulate in the next section 

We finally mention that it seems reasonable that at  least for  $q=1$, Theorem \ref{CLT1} may be also generalized
to Bessel random walks with arbitrary indices $\mu\in\b R$ with $\mu\to\infty$. A possible approach might work via explicit 
Berry-Esseen-type estimates for  Hankel transforms similar as in \cite{PV} with a careful investigation of the dependence of constants there
on the dimension parameter.

\section{Bessel convolutions on matrix cones}

In this section we collect some known facts mainly from  \cite{R} and \cite{V2}.

Let $\b F$ be one of the real division algebras $\b R, \b C$ 
or  $\b H$ with real dimension $d=1,2$ or $4$ respectively.
Denote the usual conjugation in $\b F$ by $t\mapsto \overline t$,
the real part  of $t\in \b F$ by $\mathfrak R t = \frac{1}{2}(t + \overline
t)$, and by $|t| = (t\overline t)^{1/2}$ its norm.

For $p,q\in \b N$ we denote  by $M_{p,q}$ 
the vector space of all  $p\times q$-matrices
 over $\b F$ and put $M_q:= M_q(\b F):=   M_{q,q}(\b F)$ for abbreviation. Let further
$H_q = \{x\in M_{q}: x= x^*\}$
the space of Hermitian $q\times q$-matrices. All these spaces are real Euclidean
 vector spaces with scalar product 
 $\langle x, y\rangle := \mathfrak R {tr}(x^*y)$  and  norm $\|x\|=\langle x, x\rangle^{1/2}$. Here 
$x^* := \overline x^t$ and ${tr}$ denotes the trace.
Let further
\[\Pi_q:=\{x^2:\> x\in H_q\} = \{ x^*x: x\in H_q\}\]
be the cone of all positive semidefinite
matrices in $H_q$.
Bessel functions  $J_\mu$  on these matrix cones with a parameter $\mu>0$ (and
suppressed parameters $\b F$ and $q$)  were studied from different points of
view by numerous people;
we here only mention  \cite{H}, \cite{FK}, \cite{R}, and \cite{RV} which
are relevant here. As we do not need details, we do not recapitulate the
complicated definition here and refer to these references.
We only mention that for
$q=1,$ and $\b F=\b R$, we have  $\Pi_q=[0,\infty[$,  and the Bessel function $\mathcal J_\mu$ satisfies
\[ J_\mu\bigl(\frac{x^2}{4}\bigr) = j_{\mu-1}(x) \]
where $\, j_\kappa(z) = \,_0F_1(\kappa +1; -z^2/4)\,$
is the usual  modified Bessel function in one variable.

Hypergroups are convolution structures which generalize locally compact
groups insofar as the convolution product of two point measures is
in general not a point measure again, but just a probability measure on the
underlying space. 
More precisely, a hypergroup $(X,*)$ is a locally compact Hausdorff space $X$ together with a convolution $*$
 on the space  $M_b(X)$ of  regular bounded Borel measures on $X$,
such that $(M_b(X),*)$ becomes a Banach algebra, and $*$ is
weakly continuous, probability preserving and  preserves compact
supports of measures.
 Moreover, one requires an identity $e\in X$  
with  $\delta_e*\delta_x=
\delta_x*\delta_e=\delta_x$ for $x\in X$, as well as
  a continuous involution $x\mapsto\bar x$ on $X$ such that for all $x, y\in X$,
$e\in supp(\delta_x*\delta_y)$ is equivalent to $ x=\bar y$,
and
$\delta_{\bar x} * \delta_{\bar y} =(\delta_y*\delta_x)^-$.
Here for $\mu\in M_b(X)$, the measure  $\mu^-$ is given by
$\mu^-(A)=\mu(A^-)$ for Borel sets $A\subset X$.
A hypergroup $(X,*)$ is called commutative if and only if so is the
convolution $*$. Thus for a commutative hypergroup $(X,*)$, 
$M_b(X)$ is a commutative Banach-$*$-algebra with identity $\delta_e$.
  Due to  weak continuity, the convolution of measures on a
  hypergroup is 
uniquely determined by  convolution products of point measures. 

On a commutative hypergroup $(X,*)$ there exists a (up to a multiplicative
factor) unique Haar measure $\omega$, i.e.
 $\omega$ is a positive Radon measure on $X$ satisfying 
\[ \int_X \delta_x*\delta_y(f)d\omega(y)  = \int_X f(y)d\omega(y)  \quad \text{for all } \, x\in X, \, f\in C_c(X).\]
The decisive object for harmonic analysis on a commutative hypergroup is its
dual space
\[ \widehat X :=\{\phi\in C_b(X): \phi\not= 0, \,\phi(\overline x) =
\overline{\phi(x)}, \, 
\delta_x*\delta_y(\phi) = \phi(x)\phi(y) \, \text{ for all }\, x,y\in X\}.\] 
Its elements are  called
 characters. As for LCA groups, the dual of a 
commutative hypergroup is a locally compact Hausdorff space with the topology of locally uniform 
convergence and can be identified with the symmetric spectrum of the convolution algebra $L^1(X,\omega).$ 
For more details on hypergroups we refer to \cite{J} and \cite{BH}.

\medskip
The following theorem contains some of the main results of  \cite{R}.

\begin{theorem}\label{main2}
Let $\mu\in\b R$ with $\mu > \rho-1.$ Then
\parskip=-1pt
\begin{enumerate}\itemsep=-1pt
\item[\rm{(a)}] The assignment
\begin{equation}\label{def-convo}
(\delta_r *_\mu \delta_s)(f) := 
\frac{1}{\kappa_\mu}\int_{D_q} f\bigl(\sqrt{r^2 + s^2 + svr + rv^*\!s}\,\bigr)\,
\Delta(I-vv^*)^{\mu-\rho}\, dv
\end{equation}
for $f\in C(\Pi_q) $ with  $\kappa_\mu$ as in (\ref{kappa}),
  defines a commutative hypergroup structure on $\Pi_q$ with 
neutral element $0\in \Pi_q$ and the identity mapping as involution.
The support of $\delta_r*_\mu\delta_s$ satisfies
\[\text{supp}(\delta_r*_\mu\delta_s) \subseteq  
\{t\in \Pi_q: \|t\|\leq \|r\|+\|s\|\}.\]
\item[\rm{(b)}] A Haar measure of
$(\Pi_q, *_\mu)$ is given by
\[ \omega_\mu(f) = 
\frac{\pi^{q\mu}}{\Gamma_{\Omega_q}(\mu)}
\int_{\Omega_q} f(\sqrt{r}) \Delta(r)^\gamma dr
\quad\quad\text{with}\quad\quad
\gamma = \mu-\frac{d}{2}(q-1)-1.
\]
\item[\rm{(c)}]
The dual space of $\Pi_{q,\mu}$ is given by $\widehat{\Pi_{q,\mu}} = \,\{\phi_s: s\in \Pi_q\}$
with 
$$\phi_s(r):= \mathcal J_\mu(\frac{1}{4}rs^2r)=\phi_r(s).$$
 The hypergroup $\Pi_{q,\mu}$ is self-dual via the homeomorphism $s\mapsto \phi_s$.
Under this identification of $\widehat{\Pi_{q,\mu}}$ with $\Pi_{q,\mu}\,$,
  the Plancherel measure on $\Pi_{q,\mu}\,$ is $(2\pi)^{-2\mu q}\omega_\mu$.
\end{enumerate}
\end{theorem}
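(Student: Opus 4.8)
The plan is to reduce Theorem~\ref{main2} to the orbit cases $\mu=pd/2$ with $p\in\mathbb N$, $p\ge 2q$, and then to pass to arbitrary real $\mu>\rho-1$, following \cite{R}. In the orbit case one identifies $\Pi_q$ with $U_p\backslash M_{p,q}$: since $(M_{p,q},+)$ is an abelian group on which $U_p$ acts by orthogonal automorphisms, the orbit space carries a commutative hypergroup structure whose Haar measure is the $\phi_p$-image of Lebesgue measure on $M_{p,q}$ and whose characters are the $U_p$-spherical functions of the Gelfand pair $\bigl(U_p\ltimes M_{p,q},\,U_p\bigr)$; by the Herz/Faraut--Koranyi theory these spherical functions are exactly the Bessel functions of matrix argument, $\phi_s(r)=\mathcal J_{pd/2}(\tfrac14\,rs^2r)$. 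Writing $x=u_1r$, $y=u_2s$ with partial isometries $u_i$ and $v:=u_2^*u_1\in D_q$, so that $(x+y)^*(x+y)=r^2+s^2+svr+rv^*s$, and disintegrating Lebesgue measure along the $U_p$-orbits, one arrives at formula~\eqref{def-convo} with the density $\kappa_\mu^{-1}\Delta(I-v^*v)^{\mu-\rho}\,dv$; the support bound is immediate from $\|\phi_p(x+y)\|=\|x+y\|\le\|x\|+\|y\|=\|r\|+\|s\|$, and the explicit Wishart-type density of $x^*x$ on $\Omega_q$ produces the Haar formula of part~(b), with $\gamma=\mu-\tfrac d2(q-1)-1$ and constant $\pi^{q\mu}/\Gamma_{\Omega_q}(\mu)$.

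Now fix an arbitrary real $\mu>\rho-1$. The right-hand side of \eqref{def-convo} is, by inspection, a compactly supported probability measure on $\Pi_q$ (with $\kappa_\mu$ a finite Siegel-type beta integral), it depends weakly continuously on $(r,s)$, its support lies in $\{t:\|t\|\le\|r\|+\|s\|\}$, and commutativity follows from the symmetry of the two arguments together with the substitution $v\mapsto v^*$ (which preserves $\Delta(I-v^*v)$); the neutral-element and involution axioms are trivial. The only substantial point is associativity, and this is handled through the product formula
\begin{equation*}
\int_{\Pi_q}\mathcal J_\mu\bigl(\tfrac14\,uku\bigr)\,d(\delta_r*_\mu\delta_s)(u)
\;=\;\mathcal J_\mu\bigl(\tfrac14\,rkr\bigr)\,\mathcal J_\mu\bigl(\tfrac14\,sks\bigr)
\qquad(k\in\Pi_q),
\end{equation*}
which in the orbit cases merely expresses that $\mathcal J_{pd/2}(\tfrac14\cdot k\cdot)$ is spherical. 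For general $\mu$ it is proved in \cite{R} by a direct power-series computation on the cone: using $\operatorname{tr}\bigl((uku)^j\bigr)=\operatorname{tr}\bigl((u^2k)^j\bigr)$ one expands the left-hand side into a series whose coefficients are explicit Gindikin-type moments of the positive kernel $\Delta(I-v^*v)^{\mu-\rho}\,dv$ over $D_q$, and matches it with the product of the two Bessel series. Granting in addition the inversion theorem for the Bessel transform on $\Pi_q$, so that the family $\{\mathcal J_\mu(\tfrac14\cdot k\cdot):k\in\Pi_q\}$ separates bounded measures (see \cite{FK}, \cite{H}, \cite{R}), the two probability measures $(\delta_r*_\mu\delta_s)*_\mu\delta_t$ and $\delta_r*_\mu(\delta_s*_\mu\delta_t)$ both integrate $\mathcal J_\mu(\tfrac14\cdot k\cdot)$ to $\mathcal J_\mu(\tfrac14 rkr)\,\mathcal J_\mu(\tfrac14 sks)\,\mathcal J_\mu(\tfrac14 tkt)$ and hence coincide, which proves part~(a). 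Part~(b) then follows since the commutative hypergroup $(\Pi_q,*_\mu)$ possesses an essentially unique Haar measure, and a Fubini computation straight from \eqref{def-convo} checks that the stated $\omega_\mu$ is left-invariant (its exponent and constant being pinned down by the orbit case).

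For part~(c), each $\phi_s(r)=\mathcal J_\mu(\tfrac14\,rs^2r)$ is a character by the product formula, whence $\|\phi_s\|_\infty\le\phi_s(0)=1$ as for any character of a hypergroup with identity involution; and $\phi_s(r)=\phi_r(s)$ shows that $s\mapsto\phi_s$ is continuous, while injectivity is seen by comparing the linear terms of $\mathcal J_\mu$ near $0$. That this map is proper and exhausts $\widehat{\Pi_{q,\mu}}$ follows by identifying $\widehat{\Pi_{q,\mu}}$ with the support of the Plancherel measure of the Bessel transform, which also gives the asserted self-duality. Finally the Plancherel measure is read off from the explicit $L^2$-inversion formula for the Bessel transform $\mathcal H_\mu$ on $L^2(\Pi_q,\omega_\mu)$: up to the normalising constant $(2\pi)^{-\mu q}$ this transform is a self-inverse isometry, so the Plancherel measure, transported to $\Pi_{q,\mu}$ via $s\mapsto\phi_s$, equals $(2\pi)^{-2\mu q}\omega_\mu$. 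This inversion formula is the matrix-cone Hankel--Plancherel theorem of \cite{FK}, \cite{H}; in the orbit cases it is the radial part of the Euclidean Plancherel theorem on $M_{p,q}$.

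I expect the principal obstacle to be precisely the two analytic inputs singled out above: the positive product formula for Bessel functions of matrix argument valid for all $\mu>\rho-1$ (the power-series matching, together with the fact that the integral representation stays a positive one throughout the range), and the injectivity of the Bessel transform on $M_b(\Pi_q)$ that converts the product formula into associativity. Everything else is either a closed condition inherited from the orbit cases, a polar-coordinate/Fubini computation, or standard commutative hypergroup theory.
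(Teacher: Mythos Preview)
The paper does not prove this theorem at all: it is stated there merely as a summary of the main results of R\"osler \cite{R} (see the sentence preceding the theorem, ``The following theorem contains some of the main results of \cite{R}''), and no argument is given beyond this citation. Your sketch is therefore not to be compared against a proof in the present paper but against \cite{R} itself, and in that respect your outline is faithful: the orbit-hypergroup interpretation for $\mu=pd/2$, the positive product formula for the matrix Bessel functions $\mathcal J_\mu$ extended to real $\mu>\rho-1$, associativity via injectivity of the Bessel (Hankel) transform, the Haar measure by a polar/Fubini computation, and the identification of the dual and Plancherel measure through the $L^2$ theory of $\mathcal H_\mu$ are exactly the ingredients of \cite{R}. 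For the purposes of the present paper, however, a one-line reference to \cite{R} (Theorems~3.10, 4.4 and Corollary~4.5 there) is all that is expected.
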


The most important informal observation at this point is that the convolution
(\ref{def-convo}) converges for $\mu\to\infty$ to the  semigroup
convolution
$$(\delta_r \bullet \delta_s)(f) :=f(\sqrt{r^2 + s^2}), \quad\quad r,s\in\Pi_q$$
associated with the semigroup operation 
$r\bullet s:= \sqrt{r^2 +  s^2}$ on $\Pi_q$. We next shall make this
convergence more precise, as this is the main ingredient for the proof of
Theorem \ref{CLT4}.

\section{The central limit theorem for $\mu_n>>n$}

We here derive a generalization of Theorem \ref{CLT4}
for general parameters $\mu$. We begin with some unusual notion which is needed
below:

\begin{definition}\label{def-root-lipschitz}
A function $f:\Pi_q\to \b C$ is called root-Lipschitz continuous with constant
$L$, if for all $x,y\in \Pi_q$,
$$| f(\sqrt{x}) -   f(\sqrt{y})|\le L\|x-y\|.$$
\end{definition}

\begin{lemma}\label{lemma1} There is a constant $C=C(q)$ such that for all $r,s\in
  \Pi_q$, all $\mu\ge 2\rho$, and all  root-Lipschitz continuous functions  $f$
  on $\Pi_q$ with constant $L$,
$$|\delta_r *_\mu \delta_s (f) - \delta_r \bullet \delta_s (f)|\le CL\|r\|\cdot \|s\|/\sqrt\mu.$$
\end{lemma}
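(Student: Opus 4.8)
The plan is to compare the two convolutions directly via their integral representations. Write
$$\delta_r *_\mu \delta_s(f) = \frac{1}{\kappa_\mu}\int_{D_q} f\bigl(\sqrt{r^2+s^2+svr+rv^*s}\,\bigr)\,\Delta(I-vv^*)^{\mu-\rho}\,dv,$$
and note that $\delta_r\bullet\delta_s(f) = f(\sqrt{r^2+s^2})$ is exactly the value of the integrand at $v=0$. So the difference to be estimated is
$$\frac{1}{\kappa_\mu}\int_{D_q}\Bigl(f\bigl(\sqrt{r^2+s^2+svr+rv^*s}\,\bigr) - f\bigl(\sqrt{r^2+s^2}\,\bigr)\Bigr)\,\Delta(I-vv^*)^{\mu-\rho}\,dv.$$
By root-Lipschitz continuity with constant $L$, the integrand is bounded in absolute value by $L\,\|svr + rv^*s\|\,\Delta(I-vv^*)^{\mu-\rho} \le 2L\,\|r\|\cdot\|s\|\cdot\|v\|\,\Delta(I-vv^*)^{\mu-\rho}$, using submultiplicativity of the Hilbert-Schmidt norm and $\|rv^*s\|\le\|r\|\,\|v\|\,\|s\|$. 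Hence
$$|\delta_r *_\mu \delta_s(f) - \delta_r\bullet\delta_s(f)| \le 2L\,\|r\|\cdot\|s\|\cdot\frac{1}{\kappa_\mu}\int_{D_q}\|v\|\,\Delta(I-vv^*)^{\mu-\rho}\,dv,$$
so everything reduces to showing that the weighted average $\mathbb E_\mu\|v\| := \kappa_\mu^{-1}\int_{D_q}\|v\|\,\Delta(I-vv^*)^{\mu-\rho}\,dv$ is $O(1/\sqrt\mu)$ for $\mu\ge 2\rho$, with a constant depending only on $q$ (equivalently on $d$).

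For that reduction I would estimate $\mathbb E_\mu\|v\|$ by Jensen/Cauchy-Schwarz, bounding it by $\bigl(\mathbb E_\mu\|v\|^2\bigr)^{1/2} = \bigl(\kappa_\mu^{-1}\int_{D_q} \mathrm{tr}(v^*v)\,\Delta(I-vv^*)^{\mu-\rho}\,dv\bigr)^{1/2}$. The remaining task is to compute or cleanly bound the second moment $\kappa_\mu^{-1}\int_{D_q}\mathrm{tr}(v^*v)\,\Delta(I-v^*v)^{\mu-\rho}\,dv$. This is a Selberg/Siegel-type beta integral over the matrix ball $D_q$: substituting $w = v^*v \in \Pi_q$ with $w<I$, the weight $\Delta(I-w)^{\mu-\rho}$ times the Jacobian produces a matrix Beta density, and $\int \mathrm{tr}(w)$ against it is a ratio of $\Gamma_{\Omega_q}$-factors (this is exactly the kind of computation underlying Theorem \ref{main2}(b); I would cite \cite{R} or \cite{FK} for the relevant Gindikin gamma integral). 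The point is that this ratio behaves like $C(q)/\mu$ as $\mu\to\infty$: in the scalar case $q=1$, $\kappa_\mu^{-1}\int_{-1}^1 v^2(1-v^2)^{\mu-1}\,dv = \frac{B(3/2,\mu)}{B(1/2,\mu)} = \frac{1/2}{\mu+1/2} = O(1/\mu)$, and the matrix case is the same up to $q$-dependent factors. Taking square roots gives $\mathbb E_\mu\|v\|\le C(q)/\sqrt\mu$, which is the claim.

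The main obstacle is the matrix beta-integral computation: getting a clean, dimension-explicit bound on $\kappa_\mu^{-1}\int_{D_q}\mathrm{tr}(v^*v)\,\Delta(I-v^*v)^{\mu-\rho}\,dv$ of order $1/\mu$ uniformly for $\mu\ge 2\rho$, rather than just asymptotically. One must be careful that the constant coming out of the ratio $\Gamma_{\Omega_q}(\mu+1)/\bigl(\mu\,\Gamma_{\Omega_q}(\mu)\bigr)$-type expressions does not hide $\mu$-dependence; the restriction $\mu\ge 2\rho$ is presumably what keeps all the shifted arguments of the Gindikin gamma function safely in the convergence region and makes the constant uniform. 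A convenient alternative that avoids writing the Jacobian explicitly is to use a known moment formula for the matrix-variate Beta distribution on $D_q$ (e.g. from \cite{FK} or \cite{R}), giving $\mathbb E_\mu(v^*v) = c(q,\mu)\,I$ with $c(q,\mu)=O(1/\mu)$ by the determinant-normalization; then $\mathbb E_\mu\|v\|^2 = \mathrm{tr}\,\mathbb E_\mu(v^*v) = q\,c(q,\mu) = O(q/\mu)$, and Cauchy-Schwarz finishes it. Everything else — submultiplicativity of $\|\cdot\|$, the root-Lipschitz bound, Jensen — is routine.
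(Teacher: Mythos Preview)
Your reduction is identical to the paper's: root-Lipschitz continuity and submultiplicativity of the Hilbert--Schmidt norm give
\[
|\delta_r *_\mu \delta_s(f) - \delta_r\bullet\delta_s(f)| \;\le\; C_1 L\,\|r\|\,\|s\|\cdot \frac{1}{\kappa_\mu}\int_{D_q}\|v\|\,\Delta(I-vv^*)^{\mu-\rho}\,dv,
\]
so everything comes down to showing $\mathbb E_\mu\|v\|=O(\mu^{-1/2})$ uniformly for $\mu\ge 2\rho$.

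The difference lies in how this moment is estimated. You propose Cauchy--Schwarz followed by a matrix-Beta second-moment computation, invoking Gindikin/Selberg integrals from \cite{FK} or \cite{R} to get $\mathbb E_\mu(v^*v)=c(q,\mu)\,I$ with $c(q,\mu)=O(1/\mu)$. This is correct and conceptually clean, though it leans on the matrix-Beta moment formula as a black box (and, as you note, one has to check that the $\Gamma_{\Omega_q}$-ratios stay uniformly bounded for $\mu\ge 2\rho$, which they do). The paper instead handles the moment by a direct Gaussian comparison: it cites the elementary two-sided inequality
\[
0 \le e^{-\langle v,v\rangle} - \Delta\bigl(I-\tfrac{1}{\mu}vv^*\bigr)^\mu \le \tfrac{1}{\mu}\,\mathrm{tr}\bigl((vv^*)^2\bigr)\,e^{-\langle v,v\rangle}
\]
(Lemma~3.1 of \cite{RV}), rescales $v\mapsto v/\sqrt{\mu-\rho}$, and bounds numerator and $\kappa_\mu$ separately against the Gaussian integral $\int_{M_q}\|v\|\,e^{-\langle v,v\rangle}\,dv$ and a lower bound $\kappa_\mu\ge C(\mu-\rho)^{-dq^2/2}$. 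This avoids any special-function identities and makes the uniformity in $\mu\ge 2\rho$ transparent. Your route is arguably more structural (it identifies the weight as a genuine matrix-Beta density and reads off its moments), while the paper's is more self-contained; both yield the same $O(\mu^{-1/2})$ with a $q$-dependent constant.
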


\begin{proof} We first recapitulate from Lemma 3.1 of  \cite{RV} that for
  $\mu>\rho$ and $v\in \sqrt{\mu}\cdot D_q\subset M_q$,
\begin{equation}\label{exp-inequality}
 0\le e^{-\langle v,v\rangle}-\Delta(I-\frac{1}{\mu}vv^*)^\mu \>\le \>
\frac{1}{\mu} tr((vv^*)^2)\cdot e^{-\langle v,v \rangle}.
\end{equation}
As all norms on $\Pi_q$ are equivalent, we conclude from the first inequality 
 that for $r,s>0$ and  suitable constants $C_i$,
\begin{align}\label{g2}
|\delta_r *_\mu &\delta_s (f) - \delta_r \bullet \delta_s (f)|
\notag\\ &
\le\frac{1}{\kappa_\mu}  \int_{D_q} \biggl| f\bigl(\sqrt{r^2 + s^2 + svr
  + rv^*\!s}\bigr)- f\bigl(\sqrt{r^2 + s^2 }\bigr)\biggr|
\cdot  \Delta(I-vv^*)^{\mu-\rho}\, dv
\notag\\ & 
\le\frac{L}{\kappa_\mu}  \int_{D_q} \|svr +rv^*s\| \cdot
\Delta(I-vv^*)^{\mu-\rho}\, dv
\notag\\ & 
\le\frac{C_1 L\|r\|\cdot \|s\|}{\kappa_\mu}  \int_{D_q} \|v\|\cdot
\Delta(I-vv^*)^{\mu-\rho}\, dv
\notag\\ & = \frac{C_1 L\|r\|\cdot \|s\|}{\kappa_\mu (\mu-\rho)^{(dq^2+1)/2}}
 \int_{ \sqrt{\mu-\rho}\cdot D_q} \|v\|\cdot
\Delta(I-\frac{1}{\mu-\rho}vv^*)^{\mu-\rho}\, dv
\notag\\ & 
\le\frac{C_1 L\|r\|\cdot \|s\|}{\kappa_\mu (\mu-\rho)^{(dq^2+1)/2}}
 \int_{ M_q} \|v\|\cdot e^{-\langle v,v\rangle}\, dv
\notag\\ & 
\le C_2 \frac{L\|r\|\cdot \|s\|}{\kappa_\mu (\mu-\rho)^{(dq^2+1)/2}}
\end{align}
Moreover, the second inequality in (\ref{exp-inequality}) yields that for
sufficiently large $\mu$,
\begin{align}
\kappa_\mu &=  \int_{D_q} \Delta(I-v^*v)^{\mu-\rho} dv
\notag\\
 &=\frac{1}{(\mu-\rho)^{dq^2/2} } \int_{ \sqrt{\mu-\rho}\cdot D_q} 
\Delta(I-\frac{1}{\mu-\rho}vv^*)^{\mu-\rho}\, dv
\notag\\ & 
\ge\frac{1}{(\mu-\rho)^{dq^2/2}}  \int_{ \sqrt{\mu-\rho}\cdot D_q} 
\biggl(1- \frac{ tr((vv^*)^2)}{\mu-\rho} \Biggr)  e^{-\langle v,v\rangle}\, dv
\notag\\ & 
\ge C_3(\mu-\rho)^{-dq^2/2} 
\notag
\end{align}
and thus $\kappa_\mu\ge C_4(\mu-\rho)^{-dq^2/2} $ for all $\mu\ge2\rho$ and 
some constant $C_3$. The lemma is now a consequence of Eq.~(\ref{g2}).
\end{proof}

\begin{lemma}\label{lemma2}
There is a constant $C=C(q)$ such that for all root-Lipschitz continuous functions  $f$
  on $\Pi_q$ with constant $L$, all 
 $\nu_1,\nu_2\in M^1(\Pi_q)$ with 
$m_1(\nu_i)<\infty$, and all  $\mu\ge 2\rho$, 
$$|\nu_1 *_\mu \nu_2(f) - \nu_1 \bullet \nu_2 (f)|\le CL \cdot m_1(\nu_1) \cdot m_1(\nu_2) /\sqrt\mu.$$
\end{lemma}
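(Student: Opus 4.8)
The plan is to integrate the pointwise estimate of Lemma \ref{lemma1} against the product measure $\nu_1\otimes\nu_2$. First I would write both convolutions $\nu_1 *_\mu \nu_2$ and $\nu_1 \bullet \nu_2$ as iterated integrals of the point-measure convolutions, using that $\nu_i$ has finite first moment so that all integrands are integrable; explicitly,
\[
\nu_1 *_\mu \nu_2(f) = \int_{\Pi_q}\int_{\Pi_q} \bigl(\delta_r *_\mu \delta_s\bigr)(f)\, d\nu_1(r)\, d\nu_2(s),
\]
and similarly for $\bullet$. Subtracting and applying the triangle inequality for integrals gives
\[
|\nu_1 *_\mu \nu_2(f) - \nu_1 \bullet \nu_2 (f)| \le \int_{\Pi_q}\int_{\Pi_q} \bigl|\bigl(\delta_r *_\mu \delta_s\bigr)(f) - \bigl(\delta_r \bullet \delta_s\bigr)(f)\bigr|\, d\nu_1(r)\, d\nu_2(s).
\]

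Next I would invoke Lemma \ref{lemma1}, which bounds the integrand by $CL\|r\|\,\|s\|/\sqrt\mu$ for all $\mu\ge 2\rho$ with the \emph{same} constant $C=C(q)$. Plugging this in and factoring the double integral,
\[
|\nu_1 *_\mu \nu_2(f) - \nu_1 \bullet \nu_2 (f)| \le \frac{CL}{\sqrt\mu}\int_{\Pi_q}\|r\|\, d\nu_1(r)\int_{\Pi_q}\|s\|\, d\nu_2(s) = \frac{CL}{\sqrt\mu}\, m_1(\nu_1)\, m_1(\nu_2),
\]
which is exactly the claimed bound with the same constant $C(q)$.

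I expect no serious obstacle here; the one point requiring a line of justification is the Fubini-type interchange and the passage from point measures to general measures for $*_\mu$. This is legitimate because $*_\mu$ is, by Theorem \ref{main2}(a), a genuine hypergroup convolution, hence the bilinear weakly continuous extension of the point-measure formula, and the support bound $\mathrm{supp}(\delta_r*_\mu\delta_s)\subseteq\{t:\|t\|\le\|r\|+\|s\|\}$ together with $m_1(\nu_i)<\infty$ controls integrability even when $f$ itself is merely root-Lipschitz (and thus of at most linear growth in $\|\sqrt\cdot\|^2$); the same remarks apply trivially to $\bullet$. One should also note the harmless reduction that points $r$ or $s$ equal to $0$ contribute zero to both sides and to the bound, so restricting to $r,s>0$ as in Lemma \ref{lemma1} costs nothing.
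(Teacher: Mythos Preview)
Your proposal is correct and follows essentially the same route as the paper's proof: write both convolutions as double integrals over point-measure convolutions, apply the triangle inequality, insert the pointwise bound from Lemma~\ref{lemma1}, and factor the resulting integral into $m_1(\nu_1)m_1(\nu_2)$. The paper's argument is in fact slightly terser (it omits the Fubini and integrability remarks you add), but the substance is identical.
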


\begin{proof} By Lemma \ref{lemma1},
\begin{align}
|\nu_1 *_\mu &\nu_2(f) - \nu_1 \bullet \nu_2 (f)| 
\notag\\ & 
\le \int_{\Pi_q}\int_{\Pi_q} |\delta_r *_\mu \delta_s (f) - \delta_r \bullet
\delta_s (f)|\> d\nu_1(r)\> d\nu_2(s)
\notag\\ & 
\le \frac{CL}{\sqrt\mu} \int_{\Pi_q}\int_{\Pi_q} \|r\|\cdot \|s\| \> d\nu_1(r)\>
d\nu_2(s)
\quad=\quad CL  \frac{m_1(\nu_1)  m_1(\nu_2)}{\sqrt\mu}.
\notag\end{align}
\end{proof}

\begin{lemma}\label{lemma3}
There is a constant $C=C(q)$ such that for all root-Lipschitz continuous functions  $f$
  on $\Pi_q$ with constant $L$, all 
 $\nu_1,\nu_2, \nu_3\in M^1(\Pi_q)$ with 
$m_1(\nu_i)<\infty$, and all  $\mu\ge 2\rho$, 
$$|(\nu_1 *_\mu \nu_2)\bullet\nu_3 (f) - (\nu_1 \bullet \nu_2)\bullet\nu_3
(f)|\le
 CL \cdot m_1(\nu_1) \cdot m_1(\nu_2) /\sqrt\mu.$$
\end{lemma}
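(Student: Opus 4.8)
The plan is to reduce Lemma \ref{lemma3} to Lemma \ref{lemma2} by exploiting the associativity and commutativity of the semigroup convolution $\bullet$ together with the fact that convolving with a fixed probability measure preserves the relevant estimate. First I would observe that $\nu_2\bullet\nu_3$ is again a probability measure on $\Pi_q$, and that by the support statement in Theorem \ref{main2}(a) (applied to $\bullet$, which is even simpler since $\mathrm{supp}(\delta_r\bullet\delta_s)=\{\sqrt{r^2+s^2}\}$) together with the triangle-type inequality $\|\sqrt{r^2+s^2}\|\le\|r\|+\|s\|$, one has $m_1(\nu_2\bullet\nu_3)\le m_1(\nu_2)+m_1(\nu_3)$. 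However, this naive bound would produce $m_1(\nu_2)+m_1(\nu_3)$ in place of $m_1(\nu_2)$, which is weaker than what is claimed; so I would instead avoid moving $\nu_3$ inside and argue more carefully.

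The cleaner route: write $(\nu_1*_\mu\nu_2)\bullet\nu_3 - (\nu_1\bullet\nu_2)\bullet\nu_3$ and note that for any bounded measurable $g$, $\bigl((\lambda_1-\lambda_2)\bullet\nu_3\bigr)(g) = (\lambda_1-\lambda_2)(g_{\nu_3})$ where $g_{\nu_3}(x):=\int_{\Pi_q} g(\sqrt{x^2+t^2})\,d\nu_3(t)$. Applying this with $g=f$, $\lambda_1=\nu_1*_\mu\nu_2$, $\lambda_2=\nu_1\bullet\nu_2$, we get
\[
|(\nu_1 *_\mu \nu_2)\bullet\nu_3 (f) - (\nu_1 \bullet \nu_2)\bullet\nu_3 (f)|
= |(\nu_1*_\mu\nu_2)(f_{\nu_3}) - (\nu_1\bullet\nu_2)(f_{\nu_3})|.
\]
Now I would invoke Lemma \ref{lemma2} with the test function $f_{\nu_3}$ in place of $f$. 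The key point to check is that $f_{\nu_3}$ is again root-Lipschitz continuous with the same constant $L$: indeed, for $x,y\in\Pi_q$,
\[
|f_{\nu_3}(\sqrt x)-f_{\nu_3}(\sqrt y)| \le \int_{\Pi_q}|f(\sqrt{x+t^2})-f(\sqrt{y+t^2})|\,d\nu_3(t)\le L\int_{\Pi_q}\|(x+t^2)-(y+t^2)\|\,d\nu_3(t)=L\|x-y\|,
\]
using that $\sqrt{x+t^2}=\sqrt{(\sqrt x)^2+t^2}$ so that $f_{\nu_3}(\sqrt{\cdot})$ is exactly $f(\sqrt{\cdot})$ averaged over the shifts $x\mapsto x+t^2$, and that such a shift is an isometry of $H_q$. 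Then Lemma \ref{lemma2} gives the bound $CL\,m_1(\nu_1)\,m_1(\nu_2)/\sqrt\mu$ directly, with no $\nu_3$-dependence, as required.

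The main obstacle, and the only genuinely delicate point, is verifying that the auxiliary function $f_{\nu_3}$ retains the root-Lipschitz property with an unchanged constant; everything else is bookkeeping with the bilinear weakly continuous extension of $\bullet$ (so that the identity $(\lambda\bullet\nu_3)(f)=\lambda(f_{\nu_3})$ holds for all bounded measures $\lambda$, in particular for the signed measure $\nu_1*_\mu\nu_2-\nu_1\bullet\nu_2$) and with the fact that $m_1(\nu_i)<\infty$ ensures all integrals are finite. One should also note that $f_{\nu_3}$ need not be bounded if $f$ is unbounded, but since in the intended application $f$ will be bounded (or one works with $f$ of at most quadratic growth, controlled by the moment hypotheses), the weak-continuity extension still applies; if one wants to be scrupulous, a truncation argument in $t$ handles the general case. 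I would remark in passing that by the same device Lemma \ref{lemma3} immediately iterates to finitely many factors, which is what is actually needed downstream for the CLT.
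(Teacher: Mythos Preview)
Your argument is correct and is essentially the same as the paper's: the paper defines $f_y(x):=f(x\bullet y)=f(\sqrt{x^2+y^2})$, checks that each $f_y$ is root-Lipschitz with the same constant $L$, applies Lemma~\ref{lemma2} to $f_y$, and then integrates the resulting uniform bound against $d\nu_3(y)$. You simply swap the order, averaging first to form $f_{\nu_3}=\int f_y\,d\nu_3(y)$ and then applying Lemma~\ref{lemma2} once; the root-Lipschitz verification is identical in both versions. The paper's pointwise-in-$y$ variant has the minor advantage that it sidesteps your boundedness/integrability caveat about $f_{\nu_3}$, since each $f_y$ is just a continuous function on $\Pi_q$ and the bound from Lemma~\ref{lemma2} is uniform in $y$.
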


\begin{proof}
For $y\in\Pi_q$, consider the function $f_y(x):=f(x\bullet y)$ on $\Pi_q$. By
the definition of $\bullet$ and our definition of root-Lipschitz continuity, these $f_y$
are also  root-Lipschitz continuous with the same constant $L$. Therefore, by
Lemma  \ref{lemma2},
\begin{align}
|(\nu_1 *_\mu& \nu_2)\bullet\nu_3 (f) - (\nu_1 \bullet \nu_2)\bullet\nu_3 (f)|
\notag\\ & 
\le\int_{\Pi_q}\biggl| \int_{\Pi_q} f_y(x) \> d(\nu_1 *_\mu \nu_2)(x) - \int_{\Pi_q} f_y(x) \> d(\nu_1 \bullet \nu_2)(x)
\biggr| \> d\nu_3(y)
\notag\\ & 
\le CL \cdot m_1(\nu_1) \cdot m_1(\nu_2) /\sqrt\mu.
\notag\end{align}
\end{proof}

For $\nu\in M^1(\Pi_q)$ and $n\in\b N$, we denote the $n$-fold convolution
powers
of $\nu$ w.r.t.~the convolutions $*_\mu$ and $\bullet$ by 
$\nu^{(n,*_\mu)}$ and $\nu^{(n,\bullet)}$ respectively.

\begin{lemma}\label{lemma4}
For all  $\nu \in M^1(\Pi_q)$ with
$m_2(\nu)<\infty$, and all $n\in\b N$, $\mu>\rho-1$,
$$m_1(\nu^{(n,*_\mu)})\le \sqrt{n\cdot m_2(\nu)}.$$
\end{lemma}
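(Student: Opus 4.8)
The plan is to prove the moment bound $m_1(\nu^{(n,*_\mu)})\le\sqrt{n\,m_2(\nu)}$ by induction on $n$, using the fact that the second moment grows additively under the convolution $*_\mu$, together with the Cauchy--Schwarz inequality to pass from the second moment back to the first moment. Recall that $m_1(\rho)=\int_{\Pi_q}\|s\|\,d\rho(s)$ and $m_2(\rho)=\int_{\Pi_q}\|s\|^2\,d\rho(s)$ for $\rho\in M^1(\Pi_q)$; by Cauchy--Schwarz applied to the probability measure $\rho$ we always have $m_1(\rho)^2\le m_2(\rho)$. Hence it suffices to prove the sharper statement
$$m_2(\nu^{(n,*_\mu)})\le n\cdot m_2(\nu)\qquad(n\in\b N,\ \mu>\rho-1),$$
and then take square roots.

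The key computation is to control the second moment of a single convolution $\delta_r*_\mu\delta_s$. From the defining formula (\ref{def-convo}), the point mass $\delta_r*_\mu\delta_s$ is supported on matrices $t$ with $t^2=r^2+s^2+svr+rv^*s$ for $v\in D_q$, so that $\|t\|^2=\mathrm{tr}(t^2)=\mathrm{tr}(r^2)+\mathrm{tr}(s^2)+\mathrm{tr}(svr+rv^*s)$. Integrating against the (normalized) weight $\Delta(I-vv^*)^{\mu-\rho}\,dv/\kappa_\mu$, the cross term integrates to zero by symmetry of $D_q$ under $v\mapsto-v$ (the weight is even in $v$, while $\mathrm{tr}(svr+rv^*s)$ is odd), so
$$\int_{\Pi_q}\|t\|^2\,d(\delta_r*_\mu\delta_s)(t)=\|r\|^2+\|s\|^2.$$
Integrating this identity in $r$ against $\nu^{(n,*_\mu)}$ and in $s$ against $\nu$ gives $m_2(\nu^{(n+1,*_\mu)})=m_2(\nu^{(n,*_\mu)})+m_2(\nu)$, and the induction closes immediately from the base case $m_2(\nu^{(1,*_\mu)})=m_2(\nu)$. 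Finally $m_1(\nu^{(n,*_\mu)})\le\sqrt{m_2(\nu^{(n,*_\mu)})}\le\sqrt{n\,m_2(\nu)}$, as claimed.

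The main obstacle — really the only point requiring care — is the vanishing of the cross term and the interchange of integrations. One must check that $v\mapsto-v$ is indeed a measure-preserving involution of $D_q$ under $\Delta(I-vv^*)^{\mu-\rho}\,dv$ (it is, since $(-v)(-v)^*=vv^*$ and Lebesgue measure on $M_q$ is translation/reflection invariant), and that $\mathrm{tr}(s(-v)r+r(-v)^*s)=-\mathrm{tr}(svr+rv^*s)$, which is clear. For the Fubini step one uses that $\mathrm{tr}(r^2)+\mathrm{tr}(s^2)$ is a nonnegative measurable function and that $m_2(\nu)<\infty$, together with the already established $m_2(\nu^{(n,*_\mu)})<\infty$ (finite by the induction hypothesis), so all integrals are finite and the order of integration may be exchanged. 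One small wrinkle: the statement assumes only $m_2(\nu)<\infty$, and the support bound in Theorem \ref{main2}(a) guarantees that $\nu^{(n,*_\mu)}$ has bounded support whenever $\nu$ does, but in general one simply propagates finiteness of the second moment through the induction, which is exactly what the identity above delivers.
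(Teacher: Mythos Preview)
Your proof is correct and follows essentially the same route as the paper: you show that the cross term $\mathrm{tr}(svr+rv^*s)$ integrates to zero under $v\mapsto-v$, deduce additivity of the second moment under $*_\mu$ (the paper actually states this as an equality $m_2(\nu^{(n,*_\mu)})=n\,m_2(\nu)$, as does your argument despite your initial phrasing as an inequality), and then apply Cauchy--Schwarz. The extra remarks on Fubini and propagating finiteness of $m_2$ through the induction are welcome clarifications that the paper leaves implicit.
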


\begin{proof}
By the definition of the convolution $*_\mu$, the function $m_2$ satisfies
$$\delta_r*_\mu\delta_s(m_2)= m_2(r)
+m_2(s)+\frac{1}{\kappa_\mu}\int_{D_q}tr(rvs+sv^*r)\cdot 
\Delta(I-vv^*)^{\mu-\rho}\, dv$$
for $r,s\in\Pi_q$ where the symmetry of the integrand and the substitution
$v\mapsto -v$ immediately yield that the integral is equal to 0.
Therefore,
$$\delta_r*_\mu\delta_s(m_2)= m_2(r)+m_2(s).$$
Integration yields that for all $\nu_1,\nu_2 \in M^1(\Pi_q)$ with
$m_2(\nu_i)<\infty$, 
$$m_2(\nu_1*_\mu\nu_2)=m_2(\nu_1)+m_2(\nu_2).$$
In particular we obtain by induction that for $\nu\in M^1(\Pi_q)$ with
 $m_2(\nu)<\infty$ and $n\in\b N$, 
$$m_2(\nu^{(n,*_\mu)})=n\cdot m_2(\nu).$$
Therefore, by the Cauchy-Schwarz inequality,
$$m_1(\nu^{(n,*_\mu)})=\int_{\Pi_q}\|x\|\>  d\nu^{(n,*_\mu)}(x)\le \biggl(\int_{\Pi_q}\|x\|^2\>  d\nu^{(n,*_\mu)}(x)\biggr)^{1/2}
=\sqrt{n\cdot m_2(\nu)}.$$
\end{proof}

\begin{remark} The preceding lemma has the following weaker variant under a
  weaker moment condition: For all  $\nu_1,\nu_2 \in M^1(\Pi_q)$ and all $\mu>\rho-1$,
$$m_1(\nu_1 *_\mu \nu_2)\le m_1(\nu_1)+m_1(\nu_2).$$

For the proof, observe that the convolution $*_\mu$ has the property that for $r,s\in\Pi_q$,
$$supp(\delta_r*_\mu\delta_s)\subset \{t\in\Pi_q:\quad \|t\|\le \|r\| +\|s\|
\};$$
see for instance Theorem 3.10 of \cite{R}. Therefore,
\begin{align}
m_1(\nu_1 *_\mu \nu_2) &= 
\int_{\Pi_q}\int_{\Pi_q}\biggl( \int_{\Pi_q} \|r\|  \> d(\delta_x *_\mu
\delta_y)(r)\biggr) \>  d\nu_1(x) \> d\nu_2(y)
\notag\\ & 
\le \int_{\Pi_q}\int_{\Pi_q}(\|x\| +\|y\|) \> d\nu_1(x) \> d\nu_2(y)
\notag\\ & 
= m_1(\nu_1)+m_1(\nu_2)
\notag\end{align}
as claimed.
\end{remark}

\begin{proposition}\label{proposition1}
Let $ \nu\in M^1(\Pi_q)$ with $m_2(\nu)<\infty$. Then
there is a constant $C=C(q,\nu)$ such that for all root-Lipschitz continuous functions  $f$
  on $\Pi_q$ with constant $L$, and all $n\ge2$, $\mu\ge 2\rho$, 
$$\bigl| \nu^{(n,*_\mu)}(f) - \nu^{(n,\bullet)}(f)\bigr| \le CL  \frac{n^{3/2}}{\sqrt\mu}.$$
\end{proposition}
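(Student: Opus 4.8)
The strategy is a standard telescoping argument. I want to compare the $n$-fold $*_\mu$-convolution power $\nu^{(n,*_\mu)}$ with the $n$-fold $\bullet$-power $\nu^{(n,\bullet)}$ by changing one convolution factor at a time. Writing $\nu^{(k,*_\mu)}\bullet\nu^{(n-k,\bullet)}$ for the ``hybrid'' measure in which the first $k$ factors are multiplied using $*_\mu$ and then combined with the remaining $n-k$ $\bullet$-factors, I telescope
$$\nu^{(n,*_\mu)}(f)-\nu^{(n,\bullet)}(f)=\sum_{k=1}^{n-1}\Bigl(\nu^{(k+1,*_\mu)}\bullet\nu^{(n-k-1,\bullet)}(f)-\nu^{(k,*_\mu)}\bullet\nu^{(n-k,\bullet)}(f)\Bigr).$$
Here I use associativity and commutativity of both convolutions, plus the fact that $\nu^{(k+1,*_\mu)}=\nu^{(k,*_\mu)}*_\mu\nu$ and $\nu^{(n-k,\bullet)}=\nu\bullet\nu^{(n-k-1,\bullet)}$, so the $k$-th difference is exactly of the form controlled by Lemma \ref{lemma3} with $\nu_1=\nu^{(k,*_\mu)}$, $\nu_2=\nu$, $\nu_3=\nu^{(n-k-1,\bullet)}$.

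Applying Lemma \ref{lemma3} to each summand gives a bound of
$$CL\cdot m_1(\nu^{(k,*_\mu)})\cdot m_1(\nu)/\sqrt\mu$$
for the $k$-th term. Now I invoke Lemma \ref{lemma4}: since $m_2(\nu)<\infty$, we have $m_1(\nu^{(k,*_\mu)})\le\sqrt{k\,m_2(\nu)}\le\sqrt{n\,m_2(\nu)}$. Summing over the $n-1$ terms yields
$$\bigl|\nu^{(n,*_\mu)}(f)-\nu^{(n,\bullet)}(f)\bigr|\le (n-1)\cdot CL\cdot\sqrt{n\,m_2(\nu)}\cdot m_1(\nu)/\sqrt\mu\le C'L\,n^{3/2}/\sqrt\mu,$$
where $C'=C'(q,\nu)$ absorbs the constant from Lemma \ref{lemma3} together with $m_1(\nu)\sqrt{m_2(\nu)}$ (note $m_1(\nu)\le\sqrt{m_2(\nu)}<\infty$ by Cauchy--Schwarz, so these are finite). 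This is the claimed estimate.

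The one point requiring a little care — and the likely main obstacle — is justifying that the telescoping is legitimate, i.e. that both $*_\mu$ and $\bullet$ are associative and commutative so that the hybrid measures $\nu^{(k,*_\mu)}\bullet\nu^{(n-k,\bullet)}$ are well-defined independently of bracketing, and that Lemma \ref{lemma3} genuinely applies with $\nu_1=\nu^{(k,*_\mu)}$ (one must check $m_1(\nu^{(k,*_\mu)})<\infty$, which is exactly Lemma \ref{lemma4}, and that $\mu\ge2\rho$ is the hypothesis in force). Commutativity and associativity of $*_\mu$ are part of the hypergroup structure from Theorem \ref{main2}(a); for $\bullet$ they are immediate since $r\bullet s=\sqrt{r^2+s^2}$ corresponds to addition of the squares. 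Everything else is routine: the key quantitative input is the single-step comparison with an extra $\bullet$-factor from Lemma \ref{lemma3}, and the growth control $m_1(\nu^{(k,*_\mu)})=O(\sqrt n)$ from Lemma \ref{lemma4}, which together produce the $n\cdot\sqrt n=n^{3/2}$ in the numerator.
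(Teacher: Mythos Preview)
Your proof is correct and follows essentially the same telescoping argument as the paper, invoking Lemma~\ref{lemma3} for each hybrid step and Lemma~\ref{lemma4} for the moment control. The only cosmetic difference is that the paper keeps the sharper per-term bound $m_1(\nu^{(k,*_\mu)})\le\sqrt{k\,m_2(\nu)}$ and then sums $\sum_{k=1}^{n-1}\sqrt{k}=O(n^{3/2})$, whereas you replace $\sqrt{k}$ by $\sqrt{n}$ before summing; both yield the same $n^{3/2}$ order.
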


\begin{proof}
We first observe that
\begin{align}
&\bigl| \nu^{(n,*_\mu)}(f) - \nu^{(n,\bullet)}(f)\bigr| 
\notag\\ & 
\le    
\bigl| \nu^{(n,*_\mu)}(f) - \nu^{(n-1,*_\mu)}\bullet\nu(f)\bigr| +
\bigl|(\nu^{(n-2,*_\mu)}*\nu)\bullet\nu(f) - \nu^{(n-2,*_\mu)}\bullet\nu\bullet\nu(f)\bigr|
\notag\\ & \quad +\ldots+ 
\bigl|(\nu*\nu)\bullet \nu^{(n-2,\bullet)}(f) - \nu^{(n,\bullet)}(f)\bigr|.
\notag\end{align}
Moreover, by Lemmas \ref{lemma3} and \ref{lemma4}, we have for $k=2,\ldots,n$
that
\begin{align}
\bigl|\nu^{(k,*_\mu)}\bullet \nu^{(n-k,\bullet)}(f)&- \nu^{(k-1,*_\mu)}\bullet\nu\bullet \nu^{(n-k,\bullet)}(f)\bigr|
\notag\\ & 
\le     m_1(\nu^{(k-1,*_\mu)}) \cdot m_1(\nu)\cdot \frac{C_1 L}{\sqrt\mu}
\notag\\ & 
\le  
C_1L \frac{\sqrt{k-1}}{\sqrt\mu}m_1(\nu)\cdot \sqrt{m_2(\nu)}
\notag\end{align}
with a suitable constant $C_1>0$. Combining this with the preceding inequality
and $\sum_{k=1}^n \sqrt k= O(n^{3/2})$, the proposition follows. 
\end{proof}

We now fix a probability measure $\nu\in M^1(\Pi_q)$ and 
  consider for  $\mu>\rho$,
the associated random walk $(S_n^{\mu})_{n\in\b N}$ on $\Pi_q$ with law $\nu$
according to Definition \ref{random-walk}.

\begin{theorem}\label{theorem-mu-ge-n}
Let $\nu\in M^1(\Pi_q)$ with a finite fourth moment $\int_{\Pi_q} \|x\|^4 \>
d\nu(x)<\infty$.
 Assume that $n^2/\mu_n\to 0$ for $n\to\infty$. Then
$$\frac{(S_n^{\mu_n})^2 -n\sigma^2}{\sqrt n}$$
tends in distribution for $n\to\infty$ to the normal distribution 
$N(0,\Sigma^2)$ on the vector space $H_q$ of Hermitian $q\times q$ matrices, where
$\Sigma^2$ is the covariance matrix belonging to the image measure $Q(\nu)\in
M^1(\Pi_q)\subset M^1(H_q) $ under the square mapping $Q(x):=x^2$ on $\Pi_p$.
\end{theorem}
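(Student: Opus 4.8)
The plan is to reduce the central limit theorem for the Bessel random walk of index $\mu_n$ to the corresponding statement for the semigroup convolution $\bullet$, and then to observe that the latter is an immediate consequence of the classical multivariate CLT on $H_q$. First I would fix a bounded Lipschitz-type test function. The natural object is $f:\Pi_q\to\b R$ of the form $f(x)=g(x^2)$ where $g:H_q\to\b R$ is bounded and Lipschitz; such an $f$ is then root-Lipschitz continuous in the sense of Definition \ref{def-root-lipschitz} with constant equal to the Lipschitz constant of $g$, since $|f(\sqrt x)-f(\sqrt y)|=|g(x)-g(y)|\le L\|x-y\|$. Since convergence in distribution on $H_q$ is characterized by convergence of $\int g\, d(\cdot)$ for all bounded Lipschitz $g$, it suffices to control $\int g\bigl((S_n^{\mu_n})^2\bigr)$, i.e.\ $\nu^{(n,*_{\mu_n})}(f)$ with $f=g\circ Q$.

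Next I would apply Proposition \ref{proposition1}: under the moment hypothesis $m_2(\nu)<\infty$ (which follows from $m_4(\nu)<\infty$) there is a constant $C=C(q,\nu)$ with
$$\bigl|\nu^{(n,*_{\mu_n})}(f)-\nu^{(n,\bullet)}(f)\bigr|\le CL\,\frac{n^{3/2}}{\sqrt{\mu_n}}.$$
Here I would need to rescale: we are not looking at $\nu^{(n,*_{\mu_n})}(f)$ directly but at the distribution of $\bigl((S_n^{\mu_n})^2-n\sigma^2\bigr)/\sqrt n$, so the relevant test function is $f_n(x):=g\bigl((x^2-n\sigma^2)/\sqrt n\bigr)$, whose root-Lipschitz constant is $L/\sqrt n$. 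Plugging this into Proposition \ref{proposition1} gives the bound $C(L/\sqrt n)\,n^{3/2}/\sqrt{\mu_n}=CL\,n/\sqrt{\mu_n}$, which tends to $0$ exactly under the hypothesis $n^2/\mu_n\to 0$. Thus the distribution of $\bigl((S_n^{\mu_n})^2-n\sigma^2\bigr)/\sqrt n$ has the same weak limit (if any) as its $\bullet$-analogue.

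Finally I would identify the $\bullet$-limit. Under the semigroup operation $r\bullet s=\sqrt{r^2+s^2}$ on $\Pi_q$, the $n$-fold convolution power $\nu^{(n,\bullet)}$ is simply the law of $\sqrt{Y_1+\dots+Y_n}$ where $Y_1,\dots,Y_n$ are i.i.d.\ $Q(\nu)$-distributed $H_q$-valued (in fact $\Pi_q$-valued) random variables; that is, $Q\bigl(\nu^{(n,\bullet)}\bigr)=Q(\nu)^{*n}$ as measures on $H_q$, where $*$ now denotes ordinary (vector-space) convolution on $H_q$. Since $Q(\nu)$ has mean $\sigma^2=\int s^2\,d\nu(s)$ and covariance $\Sigma^2$ (finite because $m_4(\nu)<\infty$), the classical CLT on the finite-dimensional space $H_q$ gives that $\bigl(Y_1+\dots+Y_n-n\sigma^2\bigr)/\sqrt n$ converges weakly to $N(0,\Sigma^2)$. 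Combining this with the $*_{\mu_n}$-to-$\bullet$ comparison of the previous paragraph, and using that $(S_n^{\mu_n})^2$ has law $Q\bigl(\nu^{(n,*_{\mu_n})}\bigr)$, yields the theorem.

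The main obstacle is bookkeeping the scaling correctly in the application of Proposition \ref{proposition1}: one must observe that the $\sqrt n$-rescaling inside the test function cuts the root-Lipschitz constant by $\sqrt n$, turning the $n^{3/2}/\sqrt{\mu_n}$ error into $n/\sqrt{\mu_n}$, so that the hypothesis $n^2/\mu_n\to 0$ — rather than the weaker-looking $n^3/\mu_n\to0$ — is precisely what is needed. A minor point to check is that the class of functions $f_n=g\bigl((\cdot^2-n\sigma^2)/\sqrt n\bigr)$ with $g$ bounded Lipschitz genuinely separates limit distributions on $H_q$ via the square map, but this is routine since $Q$ is a homeomorphism of $\Pi_q$ onto $\Pi_q\subset H_q$ and the pushforwards live on $\Pi_q$.
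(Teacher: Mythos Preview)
Your proposal is correct and follows essentially the same route as the paper: define the rescaled test functions $f_n(x)=g\bigl((x^2-n\sigma^2)/\sqrt n\bigr)$, observe that their root-Lipschitz constant is $L/\sqrt n$, apply Proposition~\ref{proposition1} to get the error $CLn/\sqrt{\mu_n}\to 0$, and then identify the $\bullet$-side with the classical CLT on $H_q$ via the square map $Q$. The only cosmetic difference is that you invoke bounded Lipschitz test functions and the Portmanteau characterization of weak convergence directly, whereas the paper uses Lipschitz functions in $C_0(H_q)$ and then passes to all of $C_0(H_q)$ by a $\|\cdot\|_\infty$-density argument; both are standard and yield the same conclusion.
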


\begin{proof}
Let $f\in C_0(H_q)$ be a Lipschitz-continuous function in the usual sense on
$H_q$
with the Lipschitz constant $L$. Then, for $n\in\b N$, the functions 
$$f_n(x):= f\bigl(\frac{x^2-n\sigma^2}{\sqrt n}\bigr)$$
on $\Pi_q$ are root-Lipschitz with constants $n^{-1/2}L$. Therefore, by
Proposition \ref{proposition1} and the assumptions of the theorem,
\begin{equation}\label{g3}
\bigl| \int_{\Pi_q} f_n \> d\nu^{(n,*_{\mu_n})} - \int_{\Pi_q} f_n \>
d\nu^{(n,\bullet)}\bigr|
  \le CL \frac{n}{\sqrt\mu_n} \to 0
\end{equation}
for $n\to\infty$ with
\begin{equation}\label{g4}
\int_{\Pi_q} f_n \> d\nu^{(n,*_{\mu_n})} =\int_{\Pi_q} f_n \> dP_{S_n^{\mu_n}}=
\int_{H_q} f dP_{((S_n^{\mu_n})^2 -n\sigma^2)/\sqrt n},
\end{equation}
where $P_X$ denotes the law of a random variable $X$.
Moreover, as the square mapping $Q(x):=x^2$ is a isomorphism from the semigroup
$(\Pi_q,\bullet)$ onto the semigroup $(\Pi_q,+)$, and denoting the classical
convolution of measures on $\Pi_q\subset H_q$ associated with the operation $+$
by $*$, we see that
\begin{align}\label{g5}
\int_{\Pi_q} f_n \>
d\nu^{(n,\bullet)} &= \int_{\Pi_q} f\biggl(\frac{x^2-n\sigma^2}{\sqrt n}\biggr)
\> d\nu^{(n,\bullet)}(x)
\notag\\ & =\int_{\Pi_q} f\biggl(\frac{x-n\sigma^2}{\sqrt n}\biggr)
\> dQ(\nu^{(n,\bullet)})(x)
\notag\\ &=\int_{\Pi_q} f\biggl(\frac{x-n\sigma^2}{\sqrt n}\biggr)
\> d(Q(\nu)^{(n,*)})(x)
\end{align}
where the latter tends by the classical central limit theorem on the euclidean
space $H_q$ to $\int_{H_q} f\> dN(0,\Sigma^2)$. Taking (\ref{g3}) and
(\ref{g4}) into account, we obtain that
\begin{equation}\label{g6}
\int_{H_q} f dP_{((S_n^{\mu_n})^2 -n\sigma^2)/\sqrt n} \to \int_{H_q}  f\> dN(0,\Sigma^2)
\end{equation}
for $n\to\infty$. As the space of Lipschitz continuous functions in $C_0(H_q)$ is
$\|.\|_\infty$-dense in $C_0(H_q)$, a simple $\epsilon$-argument together with
the triangle inequality ensures that Eq.~(\ref{g6}) holds for all $f\in
C_0(H_q)$. This completes the proof.
\end{proof}

We briefly consider the  case of point measures $\nu=\delta_x$.
In this case we have  $\Sigma^2=0$ and we do not need to apply the classical CLT
above. In particular, the first part of the preceding proof
leads to the following weak law:

\begin{corollary}
Let $(a_n)_{n\ge1}\subset ]0,\infty[$ an increasing sequence with 
$a_n= o\bigl(\frac{n^{3/2}}{\mu^{1/2}}\bigr)$ for $n\to\infty$. 
If $\nu=\delta_x$  for some $x\in \Pi_q$,  then 
$$\frac{(S_n^{\mu_n})^2 -nx^2}{a_n}\to 0 \quad\quad\text{in probability}.$$ 
\end{corollary}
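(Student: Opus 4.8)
The plan is to read this as a weak law of large numbers that comes out of Proposition \ref{proposition1} essentially for free, the point being that for a point mass $\nu=\delta_x$ the $\bullet$-convolution powers degenerate to point masses. Since $r\bullet r=\sqrt{r^2+r^2}=\sqrt2\,r$ for every $r\in\Pi_q$ and $\bullet$ is associative, an immediate induction gives $\delta_x^{(n,\bullet)}=\delta_{\sqrt n\,x}$, so in particular $(\sqrt n\,x)^2=nx^2$; thus the $\bullet$-random walk started at $0$ with increment law $\delta_x$ sits deterministically at $\sqrt n\,x$ at time $n$.

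Next I would fix a Lipschitz-continuous $f\in C_0(H_q)$ with Lipschitz constant $L$ and, exactly as in the proof of Theorem \ref{theorem-mu-ge-n}, introduce the test functions
$$f_n(y):= f\Bigl(\frac{y^2-nx^2}{a_n}\Bigr),\qquad y\in\Pi_q.$$
From $|f_n(\sqrt u)-f_n(\sqrt v)|=\bigl|f((u-nx^2)/a_n)-f((v-nx^2)/a_n)\bigr|\le (L/a_n)\|u-v\|$ one reads off that $f_n$ is root-Lipschitz continuous with constant $L/a_n$. Applying Proposition \ref{proposition1} (legitimate since $\mu_n\ge 2\rho$ for all large $n$) we get
$$\bigl|\nu^{(n,*_{\mu_n})}(f_n)-\nu^{(n,\bullet)}(f_n)\bigr|\ \le\ C\,\frac{L}{a_n}\cdot\frac{n^{3/2}}{\sqrt{\mu_n}}\ \longrightarrow\ 0$$
by the hypothesis $a_n=o(n^{3/2}/\sqrt{\mu_n})$. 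Here $\nu^{(n,*_{\mu_n})}(f_n)=E\bigl(f\bigl(((S_n^{\mu_n})^2-nx^2)/a_n\bigr)\bigr)$ is the quantity of interest, while the first step gives $\nu^{(n,\bullet)}(f_n)=f_n(\sqrt n\,x)=f(0)$. Hence $E\bigl(f\bigl(((S_n^{\mu_n})^2-nx^2)/a_n\bigr)\bigr)\to f(0)$ for every such $f$.

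Finally, since the Lipschitz functions are $\|\cdot\|_\infty$-dense in $C_0(H_q)$, the same $\epsilon$-argument used at the end of the proof of Theorem \ref{theorem-mu-ge-n} promotes this to all $f\in C_0(H_q)$, so the laws of $Y_n:=((S_n^{\mu_n})^2-nx^2)/a_n$ converge weakly to $\delta_0$; and weak convergence to a point measure is equivalent to convergence in probability to that point, which is the claim. I do not expect a genuine obstacle here: the only things that need care are the bookkeeping of the root-Lipschitz constant of $f_n$ (which supplies the crucial factor $1/a_n$) and the tacit requirement $\mu_n\ge 2\rho$ needed to invoke Proposition \ref{proposition1}; the monotonicity of $(a_n)$ is not used for this weak statement.
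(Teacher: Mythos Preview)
Your argument is exactly the one the paper intends: it simply says that for $\nu=\delta_x$ one has $\Sigma^2=0$, so the classical CLT step is not needed and ``the first part of the preceding proof'' (i.e., Proposition~\ref{proposition1} applied to the rescaled test functions) already gives the weak law. Your identification $\delta_x^{(n,\bullet)}=\delta_{\sqrt n\,x}$, the root-Lipschitz constant $L/a_n$ for $f_n$, and the density argument at the end all match this.

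There is, however, one genuine slip (which is in fact a typo in the statement itself): from $a_n=o\bigl(n^{3/2}/\sqrt{\mu_n}\bigr)$ you \emph{cannot} conclude that
\[
\frac{L}{a_n}\cdot\frac{n^{3/2}}{\sqrt{\mu_n}}\longrightarrow 0;
\]
this quotient would in fact diverge. What is needed (and what the paper clearly intends) is the reverse condition $n^{3/2}/\sqrt{\mu_n}=o(a_n)$. The remark immediately following the corollary confirms this reading: with $a_n=n$ and $n/\mu_n\to 0$ one has $n^{3/2}/\sqrt{\mu_n}=n\sqrt{n/\mu_n}=o(n)=o(a_n)$, whereas the condition as literally written would fail. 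With the $o$-relation corrected, your proof is complete and coincides with the paper's.
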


In particular, for $a_n:=n$ and $n/\mu_n\to 0$, we obtain $S_n^{\mu_n}/\sqrt n\to
x$. We note that this particular result was derived under much weaker conditions on the $\mu_n$ in
\cite{RV} by different methods.

We finally note that a similar CLT is derived in \cite{V3}
 for radial random walks on the hyperbolic spaces when time and dimension tend to infinity
 similar as above.

\end{document}